\newtheorem{theorem}{Theorem}[section]
\newtheorem{lemma}[theorem]{Lemma}
\newtheorem{corollary}[theorem]{Corollary}
\newtheorem{proposition}[theorem]{Proposition}
\begin{document}

\title{ \bf On small bases for which $1$ has countably many expansions }
\author{\small Yuru Zou,  Lijin Wang, Jian Lu, Simon Baker}
 \date{}
\maketitle

\begin{abstract}
Let $q\in(1,2)$. A $q$-expansion
of a number $x$ in $[0,\frac{1}{q-1}]$ is a sequence $(\delta_i)_{i=1}^\infty\in\{0,1\}^{\mathbb{N}}$ satisfying

$$
x=\sum_{i=1}^\infty\frac{\delta_i}{q^i}.$$  Let $\mathcal{B}_{\aleph_0}$ denote
the set of $q$ for which there exists $x$ with a countable number
of $q$-expansions, and let $\mathcal{B}_{1, \aleph_0}$ denote the set
of $q$ for which $1$ has a countable number of  $q$-expansions. In
\cite{Sidorov6} it was shown that
 $\min\mathcal{B}_{\aleph_0}=\min\mathcal{B}_{1,\aleph_0}=\frac{1+\sqrt{5}}{2},$
and in \cite{Baker} it was shown  that
$\mathcal{B}_{\aleph_0}\cap(\frac{1+\sqrt{5}}{2}, q_1]=\{ q_1\}$,
where $q_1(\approx1.64541)$ is  the positive root of
$x^6-x^4-x^3-2x^2-x-1=0$. In this paper
 we show that the second smallest point of $\mathcal{B}_{1,\aleph_0}$ is $q_3(\approx1.68042)$, the positive root of
 $x^5-x^4-x^3-x+1=0$. Enroute to proving this result we show that $\mathcal{B}_{\aleph_0}\cap(q_1, q_3]=\{
q_2, q_3\}$, where $q_2(\approx1.65462)$ is the positive root of
$x^6-2x^4-x^3-1=0$.

\noindent{\bf Key Words}: beta-expansion, \and non-integer base, \and
countable expansions

 \noindent{\bf  AMS Subject
Classifications: 11A63, 37A45}
\end{abstract}

\section{Introduction}
\label{intro} Let $q\in(1,2)$ and $I_{q}:=[0,\frac{1}{q-1}]$. For each $x\in I_{q}$ there exists a sequence $(\delta_i)_{i=1}^\infty\in\{0,1\}^{\mathbb{N}}$ such that
$$
x=\sum_{i=1}^\infty\frac{\delta_i}{q^i}.$$ The sequence $(\delta_i)_{i=1}^\infty$ is called a $q$-expansion for $x$. Without confusion, we
  simplify $(\delta_i)_{i=1}^\infty$ as $(\delta_i)$. It is straightforward to show that a real number $x$ has a $q$-expansion if and only if $x\in I_{q}.$

We now introduce some notation.  The so-called {\it coding map} is defined to be
$\Pi:\{0,1\}^\mathbb{N}\rightarrow I_q$ where
\begin{equation}\label{code map}
\Pi((\delta_i) )=\sum_{i=1}^\infty\frac{\delta_i}{q^i}.
\end{equation}
Throughout we let
$(\varepsilon_1...\varepsilon_n)^k$ denote the $k$ fold
concatenation of $(\varepsilon_1...\varepsilon_n)\in\{0,1\}^n$, and similarly let $(\varepsilon_1...\varepsilon_n)^\infty$ denote the
infinite concatenations of $(\varepsilon_1...\varepsilon_n).$ Given $x\in I_q$, let $\Sigma_q(x)$ denote the set of all
$q$-expansions of $x$, that is
$$
\Sigma_q(x)=\left\{(\delta_i)\in\{0,1\}^\mathbb{N}:
\Pi((\delta_i))=x\right\}.
$$
The  cardinality of the set $\Sigma_q(x)$ plays an important role in
the investigation of representations of real numbers in non-integer
bases. It was shown in
\cite{EJK} that if $q\in (1, \frac{1+\sqrt{5}}{2})$ then for each $x\in (0,\frac{1}{q-1})$ there are $2^{\aleph_0}$ different
$q$-expansions. Sidorov showed in \cite{Sidorov1, Sidorov3} that if
$q\in (1,2)$ then Lebesgue almost every $x\in (0,\frac{1}{q-1})$ has $2^{\aleph_0}$ different
$q$-expansions. Points belonging to $I_q$  with a unique
$q$-expansion were investigated in \cite{DK2, GS} for
$q\in(\frac{1+\sqrt{5}}{2},2)$. Some results concerning $x\in I_q$
having a fixed  number of $q$-expansions were established in
\cite{Baker, Sidorov5, DK1, VK, EHJ, Sidorov4, ZLL}.

Let $m\in \mathbb{N}\cup \{\aleph_0\}$ and define
$$
\mathcal{B}_m:=\left\{q\in\left[\frac{1+\sqrt{5}}{2},2\right):\text{
there exits} \;x\in I_q\;\text{ satisfying}\;
\#\Sigma_q(x)=m\right\}.
$$
Here and hereafter $\#A$ denotes the cardinality of a set $A$. The following results are known to hold:
\begin{itemize}
  \item $\min \mathcal{B}_{\aleph_0}=\frac{1+\sqrt{5}}{2}$ \cite{EHJ}.
  \item $\min \mathcal{B}_{2}=\check{q}\approx1.71064$ (the positive root of $x^4-2x^2-x-1=0$) \cite{Sidorov4}.
  \item $\min
\mathcal{B}_{k}=q_f\approx1.75488, k\geq 3$ (the positive root of
$x^3-2x^2-1=0$) \cite{Sidorov5}.
\item $\mathcal{B}_{2}\cap(\check{q},q_{f}]=\{q_{f}\}$ \cite{Sidorov5}.
\item The smallest element of $\mathcal{B}_{\aleph_0}$ strictly greater
than $\frac{1+\sqrt{5}}{2}$  is  $q_1(\approx1.64541)$ (the positive
root of $x^6-x^4-x^3-2x^2-x-1=0$) \cite{Baker}.
\end{itemize}

Understanding the $q$-expansions of $1$ is a classical problem, see \cite{GS, KL, Parry} and the references therein. The motivation of this paper is to provide a clearer understanding of what values $\#\Sigma_{q}(x)$ can take. Let
$$
\mathcal{B}_{1,m}:=\left\{q\in\left[\frac{1+\sqrt{5}}{2},2\right):
\#\Sigma_q(1)=m\right\}.
$$
It was shown in \cite{KL} that
$\min\mathcal{B}_{1,1}\approx1.78723$ (the Komornik-Loreti constant). In \cite{EHJ} it was proved that $\min\mathcal{B}_{1,\aleph_0}=\frac{1+\sqrt{5}}{2}$. For any $n\in (\mathbb{N}\setminus \{1\})\cup
\{\aleph_0\}$, Erd\"{o}s and Jo\'{o} \cite{EJ2, EJ1} constructed a
continuum of real numbers $q\in[q_0,2)$ ($q_0>1.99803$) for which
the number $1$ has precisely $n$ $q$-expansions.

Motivated by the results listed above, a natural question arises:
 what is the second smallest point of $\mathcal{B}_{1,\aleph_0}$? In this paper we will answer this question.

 Throughout this paper we let $q_1, q_2, q_3$ be as follows:  $q_1
 \approx1.64541$, $q_2 \approx1.65462$ and $q_3 \approx 1.68042$,
 which are the
positive roots of $x^6-x^4-x^3-2x^2-x-1=0$, $x^6-2x^4-x^3-1=0$ and
$x^5-x^4-x^3-x+1=0$, respectively. Our main result is the following.
\begin{theorem}\label{main theorem 1}
The smallest element of $\mathcal{B}_{1,\aleph_0}$ strictly greater
than $\frac{1+\sqrt{5}}{2}$ is $q_3$.
\end{theorem}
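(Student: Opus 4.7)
The plan is to use $\mathcal{B}_{1,\aleph_0}\subseteq\mathcal{B}_{\aleph_0}$ together with the result $\mathcal{B}_{\aleph_0}\cap(\frac{1+\sqrt{5}}{2},q_1]=\{q_1\}$ from \cite{Baker}. The first half of the argument extends that description by establishing the intermediate claim announced in the abstract, $\mathcal{B}_{\aleph_0}\cap(q_1,q_3]=\{q_2,q_3\}$. Granting this, the only candidates in $(\frac{1+\sqrt{5}}{2},q_3]$ for membership in $\mathcal{B}_{1,\aleph_0}$ are $q_1,q_2,q_3$, so the theorem reduces to proving $q_3\in\mathcal{B}_{1,\aleph_0}$ while $q_1,q_2\notin\mathcal{B}_{1,\aleph_0}$.

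The natural tool is the pair of inverse branches $T_0(y)=qy$ and $T_1(y)=qy-1$ acting on $I_q$. A word $(\delta_i)$ is a $q$-expansion of $x$ iff $T_{\delta_n}\circ\cdots\circ T_{\delta_1}(x)\in I_q$ for every $n$, and the digit $\delta_i$ is genuinely free only when the preceding state lies in the switch interval $J_q:=[\tfrac{1}{q},\tfrac{1}{q(q-1)}]$. Thus $\Sigma_q(x)$ is in bijection with the infinite branches of a binary tree whose branchings occur precisely at visits of the orbit to $J_q$, and $\#\Sigma_q(x)=\aleph_0$ forces a structural dichotomy: at every visit to $J_q$ at least one of the two choices must lead to an orbit that avoids $J_q$ forever afterwards.

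For the affirmative side, using the defining relation $q_3^5=q_3^4+q_3^3+q_3-1$ and setting $y_k=q_3 y_{k-1}-\delta_k$ with $y_0=1$, one checks that the greedy expansion of $1$ is $1(1000)^\infty$; its orbit is the period-$4$ cycle $q_3-1\to q_3^2-q_3-1\to q_3^3-q_3^2-q_3\to q_3^4-q_3^3-q_3^2\to q_3-1$, and comparison with $J_{q_3}\approx[0.595,0.875]$ reveals that only $y=q_3-1\approx0.68$ lies in $J_{q_3}$. At each revisit (times $1,5,9,\ldots$) choosing the alternative digit $\delta=0$ leads, again by the polynomial, into the $2$-cycle $\{q_3^3-q_3^2-1,\,q_3^4-q_3^3-q_3-1\}$ which is disjoint from $J_{q_3}$, so after the switch the remaining digits are forced. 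Consequently every $q_3$-expansion of $1$ is either the greedy one or is obtained by switching exactly once at some revisit, yielding precisely $\aleph_0$ expansions.

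For the negative side, at $q=q_1$ and $q=q_2$ I would follow the same procedure: compute the greedy orbit of $1$ from the defining polynomial, locate its visits to $J_q$, and then show that at these visits \emph{both} choices produce orbits that return to $J_q$, embedding a full binary subtree into $\Sigma_q(1)$ and forcing $\#\Sigma_q(1)=2^{\aleph_0}$. For $q\in(\frac{1+\sqrt{5}}{2},q_3)\setminus\{q_1,q_2\}$ the intermediate result removes $q$ from $\mathcal{B}_{\aleph_0}$ and hence from $\mathcal{B}_{1,\aleph_0}$. The main obstacle is this intermediate result itself: it is a statement about every $x\in I_q$, and requires a parameter-uniform argument over $q\in(q_1,q_3]$ showing that no point admits a branching tree with countably many but not uncountably many infinite branches except at the algebraic values $q=q_2,q_3$. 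Establishing this dichotomy demands explicit orbit estimates for the endpoints of $J_q$ and a careful case analysis of how $(T_0,T_1)$ acts near them as $q$ sweeps the interval, which is the most delicate part of the argument.
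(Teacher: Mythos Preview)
Your overall reduction is exactly the paper's: use $\mathcal{B}_{1,\aleph_0}\subseteq\mathcal{B}_{\aleph_0}$ together with Theorem~\ref{main theorem2}, leaving only the verification that $q_3\in\mathcal{B}_{1,\aleph_0}$ while $q_1,q_2\notin\mathcal{B}_{1,\aleph_0}$. Your treatment of $q_3$ is also essentially the paper's: the greedy orbit of $1$ is the $4$-cycle through $q_3-1$, the only switch-region visit is at $q_3-1$, and the alternative digit there drops into the $2$-cycle coded by $(10)^\infty$, giving $\Sigma_{q_3}(1)=\{1(10^3)^k01(10)^\infty,\,1(10^3)^\infty\}$.

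Where you diverge is the argument that $q_1,q_2\notin\mathcal{B}_{1,\aleph_0}$. You propose to exhibit $2^{\aleph_0}$ expansions directly by checking that at each switch-region visit both choices return to the switch region, thereby ``embedding a full binary subtree''. This is the step that is not yet a proof. Checking the first level is easy (indeed, for $q_1$ both $T_0$ and $T_1$ applied at $q_1-1$ lead, after forced moves, to the \emph{same} point $w=q_1^3-q_1^2-1\in S_{q_1}$), but that alone only gives $\#\Sigma_{q_1}(1)=2\cdot\#\Sigma_{q_1}(w)$ in the finite case and tells you nothing about $\aleph_0$ versus $2^{\aleph_0}$. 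To embed a genuine binary tree you must verify the two-sided return at \emph{every} subsequent branching, and the orbit structure does not stabilise quickly; there is no obvious finite check that closes this induction. Since $q_1\in\mathcal{B}_{\aleph_0}$, null infinite points do exist in $I_{q_1}$, so you also have to rule out that the orbit of $1$ threads into that countable pattern.

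The paper sidesteps this entirely with an algebraic conjugate argument. One first observes that if $\#\Sigma_{q_1}(1)=\aleph_0$ then, following the null-infinite structure (each branching has one side landing in $U_{q_1}$, which by Lemma~C consists of eventually periodic points), $1$ must possess an expansion of the form $1100000(\delta_i)$ with $(\delta_i)$ eventually periodic. Summing the geometric tail turns this into a rational identity $1=q_1^{-1}+q_1^{-2}+f(q_1)/g(q_1)$ with $f,g\in\mathbb{Z}[x]$; such an identity must persist under Galois conjugation, but evaluating at the real conjugate $q_1^*\approx-1.20458$ yields a strict inequality, a contradiction. The same device (with $q_2^*\approx-1.26493$) handles $q_2$. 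This approach replaces your open-ended orbit chase by a single numerical inequality, and is the missing idea in your proposal.
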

Enroute to proving this result we show the following.
\begin{theorem}\label{main theorem2}
 $\mathcal{B}_{\aleph_0}\cap(\frac{1+\sqrt{5}}{2}, q_3]=\{ q_1,
q_2, q_3\}$.
\end{theorem}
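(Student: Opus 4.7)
The plan is to combine the result $\mathcal{B}_{\aleph_0}\cap(\frac{1+\sqrt{5}}{2}, q_1]=\{q_1\}$ from \cite{Baker} with two new claims: first, that both $q_2$ and $q_3$ belong to $\mathcal{B}_{\aleph_0}$, and second, that no $q\in(q_1,q_3]\setminus\{q_2,q_3\}$ belongs to $\mathcal{B}_{\aleph_0}$. Together these yield Theorem \ref{main theorem2}.

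For the first claim I would exhibit an explicit $x\in I_q$ with $\#\Sigma_q(x)=\aleph_0$ for each $q\in\{q_2,q_3\}$. At $q=q_3$ the natural choice is $x=1$, so this inclusion is absorbed into Theorem \ref{main theorem 1}. At $q=q_2$, the defining identity $q_2^6=2q_2^4+q_2^3+1$ translates into equalities between short binary words, and these can be exploited to construct a point whose tree of $q_2$-expansions has a fixed prefix followed by a self-similar branching pattern in which each infinite branch eventually merges into a single forced tail; a direct count then produces $\aleph_0$ branches.

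For the second claim the framework is the switch region $S_q:=[\frac{1}{q},\frac{1}{q(q-1)}]\subset I_q$ together with the branch maps $T_{q,0}(y)=qy$ and $T_{q,1}(y)=qy-1$. A digit of an expansion of $x$ can be chosen freely at step $n$ precisely when the corresponding iterate of $x$ under some composition of these maps lies in $S_q$, so $\#\Sigma_q(x)$ is controlled by the pattern of $S_q$-visits along the branching orbit of $x$. I would partition $(q_1,q_3]$ into finitely many subintervals whose endpoints include $q_2$ and $q_3$ together with any further critical bases at which compositions $T_{q,w_1}\cdots T_{q,w_k}(\partial S_q)$ cross $\partial S_q$. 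Within each subinterval the aim is to show that the subshift of admissible switching sequences is either finite (so $\#\Sigma_q(x)<\infty$ for every $x$ and hence $q\notin\mathcal{B}_{\aleph_0}$) or contains the full $2$-shift as a subsystem (so that any $x$ whose branching orbit enters $S_q$ at all already has $2^{\aleph_0}$ expansions, again precluding $\#\Sigma_q(x)=\aleph_0$).

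The principal obstacle lies in this subshift analysis, and specifically in pinpointing $q_2$ and $q_3$ as the only bases in $(q_1,q_3]$ where the admissible subshift is countably infinite but not Cantor-type. Carrying this out requires many explicit comparisons of the form $T_{q,w_1}\cdots T_{q,w_k}(\tfrac{1}{q})\lessgtr\tfrac{1}{q(q-1)}$, with the equalities at $q_2$ and $q_3$ encoded by their defining polynomials. The unexpected appearance of $q_2$ strictly between $q_1$ and $q_3$ is the most delicate feature, and isolating it will demand careful tracking of how the orbit of $\tfrac{1}{q-1}$ (equivalently of $1$ after one application of $T_{q,1}$) interacts with $S_q$ as $q$ ranges through the interval.
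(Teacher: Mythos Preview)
Your inclusion arguments for $q_2,q_3$ are plausible sketches, but the exclusion argument has a genuine gap: the proposed dichotomy ``either the switching subshift is finite or it contains a full $2$-shift'' does not do the job. For every $q$ in this range almost every $x\in I_q$ already has $2^{\aleph_0}$ expansions, while the points of $U_q$ have exactly one; so the global switching dynamics is never ``finite'', and the presence of a full-shift subsystem somewhere says nothing about whether \emph{some particular} $x$ has exactly $\aleph_0$ expansions. The question is inherently pointwise, and your partition-of-$(q_1,q_3]$ plan gives no mechanism for singling out those exceptional $x$ whose branching tree is infinite but nowhere doubly infinite.

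The missing idea is the null-infinite-point characterization (Lemma~A in the paper): $q\in\mathcal{B}_{\aleph_0}$ if and only if there exists $x$ whose every branching point $\mathbf{a}(x)\in S_q$ has exactly one of $T_{q,0}(\mathbf{a}(x)),T_{q,1}(\mathbf{a}(x))$ with infinite orbit set. Since $q<\check{q}=\min\mathcal{B}_2$, the ``finite'' side must in fact be \emph{unique}, so every branching point lies in $\bigcup_i T_{q,i}^{-1}(U_q)\cap S_q$. For $q\in[q_1,q_3]$ the set $U_q$ is completely explicit (Lemma~C: the orbit closure of $(10)^\infty$), and one shows that $\bigcup_i T_{q,i}^{-1}(U_q)\cap J_q=\{\mathbf{y}_j,\mathbf{z}_j:1\le j\le 3\}$, a set of six points. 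Pigeonhole then forces any null-infinite orbit through $J_q$ to be eventually periodic on this finite set, reducing the whole problem to finitely many polynomial equations in $q$; solving these isolates precisely $q_1,q_2,q_3$. Tracking only the orbit of $1$ (or $\frac{1}{q-1}$), as you suggest at the end, is not enough: at $q_2$ the witness for $\aleph_0$ expansions is $\mathbf{y}_1=\Pi(01(10)^\infty)$, not $1$, and indeed $1$ has $2^{\aleph_0}$ expansions there.
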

We remark that $\mathcal{B}_{\aleph_0}\cap(\frac{1+\sqrt{5}}{2},
q_1]=\{ q_1\}$ is known \cite{Baker}.
 The following corollary is an immediate consequence of Theorem
\ref{main theorem 1}, $\min\mathcal{B}_2\approx 1.71064$
\cite{Sidorov6}, $\min \mathcal{B}_{k}\approx1.75488$ for  $k\geq 3$
\cite{Sidorov5}, $\mathcal{B}_{2}\cap(\check{q},q_{f}]=\{q_{f}\}$
\cite{Sidorov5}, and $\min\mathcal{B}_{1,1}\approx 1.78723$
\cite{KL}.
\begin{corollary}
If $q\in(1, q_3)\setminus\{\frac{1+\sqrt{5}}{2}\}$. Then $1$ has
$2^{\aleph_0}$ different $q$-expansions.
\end{corollary}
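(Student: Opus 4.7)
The corollary is a pure elimination argument. Since $\#\Sigma_q(x)$ is always either a positive integer, $\aleph_0$, or $2^{\aleph_0}$, the plan is to rule out every finite value and the value $\aleph_0$ for $\#\Sigma_q(1)$ on the stated range of $q$.

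For $q\in(1,\frac{1+\sqrt{5}}{2})$, I would simply invoke the result from \cite{EJK} recalled in the introduction: every $x\in(0,\frac{1}{q-1})$ has $2^{\aleph_0}$ many $q$-expansions. Since $q<2$ gives $\frac{1}{q-1}>1$, the point $1$ lies in this open interval and the conclusion is immediate.

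The main case is $q\in(\frac{1+\sqrt{5}}{2},q_3)$. Here I would assemble the four threshold facts cited immediately before the corollary statement: $\min\mathcal{B}_{1,1}\approx 1.78723$, $\min\mathcal{B}_2\approx 1.71064$, $\min\mathcal{B}_k\approx 1.75488$ for $k\geq 3$, and Theorem \ref{main theorem 1}, which gives $\mathcal{B}_{1,\aleph_0}\cap(\frac{1+\sqrt{5}}{2},q_3)=\emptyset$. Each finite-case threshold strictly exceeds $q_3\approx 1.68042$, and the trivial inclusion $\mathcal{B}_{1,k}\subseteq \mathcal{B}_k$ (valid for every finite $k\geq 2$) then forces $\#\Sigma_q(1)\notin\{1,2,3,\ldots\}\cup\{\aleph_0\}$. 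The only remaining option is $2^{\aleph_0}$.

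There is no real obstacle here; the corollary is essentially bookkeeping, combining Theorem \ref{main theorem 1} with lower bounds already in the literature. The one point worth flagging is that the exclusion of $\frac{1+\sqrt{5}}{2}$ from the range is genuinely necessary: at that point $1$ has exactly $\aleph_0$ expansions by \cite{EHJ}, so the claim would fail there.
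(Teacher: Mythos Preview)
Your proposal is correct and matches the paper's approach exactly: the paper does not write out a proof at all but simply states that the corollary is an immediate consequence of Theorem~\ref{main theorem 1} together with the thresholds $\min\mathcal{B}_{1,1}\approx 1.78723$, $\min\mathcal{B}_2\approx 1.71064$, and $\min\mathcal{B}_k\approx 1.75488$ for $k\geq 3$. Your two-case split (below and above the golden ratio) and the explicit use of $\mathcal{B}_{1,k}\subseteq\mathcal{B}_k$ fill in precisely the details the paper leaves implicit; the extra ingredient $\mathcal{B}_2\cap(\check{q},q_f]=\{q_f\}$ that the paper cites is in fact unnecessary here since $q_3<\check{q}$, so your omission of it is harmless.
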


This paper is arranged as follows. Some definitions and results from
\cite{Baker} will be recalled in Section 2. Some results from this paper will
be extended to our setting. In Section 3 we prove
Theorem \ref{main theorem2}. The final section is  devoted to the
proof of Theorem \ref{main theorem 1}.

\section{Preliminaries}
In this section we shall recall some definitions and results from
\cite{Baker}. An interpretation of $q$-expansions from the perspective of dynamical
systems was given in \cite{Baker}, see also \cite{DajaniV,DajaniV2,DajaniK1}.  Let $T_{q,0}(x)=q x$ if $x\in [0,\frac{1}{q^2-q}],$ and
let $T_{q,1}(x)=q x-1$ if  $x\in[\frac{1}{q},\frac{1}{q-1}]$. We see
that if $x\in L_q:=[0,\frac{1}{q})$ or $x\in
R_q:=(\frac{1}{q^2-q},\frac{1}{q-1}]$ then only one $T_{q,i}$ can be applied. However when $x\in S_q:=[\frac{1}{q}, \frac{1}{q^2-q}]$,
which is usually referred to as the switch region, we have a choice
between $T_{q,0}$ and $T_{q,1}$. An element of
$\bigcup_{n=0}^\infty\{T_{q,0}, T_{q,1}\}^n$ is denoted by
$\mathbf{a}$, here $\{T_{q,0}, T_{q,1}\}^0$ denotes the identity
map. Moreover, if $\mathbf{a}=a_1...a_n$ we shall use
$\mathbf{a}(x)$ to denote $a_n(\cdots( a_1(x))\cdots)$. Given $x\in I_{q}$ we
call a finite sequence of transformations $\mathbf{a}=a_1...a_n$
\emph{minimal} for $x$ if $\mathbf{a}(x)\in S_q$ and
$\mathbf{a}|i(x)\notin S_q$ for all $i<n$. Here
$\mathbf{a}|i=a_1...a_i$.  We call $\mathbf{a}(x)$ a
\emph{branching point} of $x$ if $\mathbf{a}(x)\in S_q$.

Define
$$\Omega_q(x):=\{(a_i)_{i=1}^\infty\in\{T_{q,0}, T_{q,1}\}^\mathbb{N}: a_n(\cdots( a_1(x))\cdots)\in
I_q \;\;\text{for all}\;\; n\in\mathbb{N}\}.$$ The set $\Omega_{q}(x)$ is significant because $\# \Sigma_q(x)=\#
\Omega_q(x),$ where our bijection is given by mapping $(\delta_{i})$ to $(T_{q,\delta_{i}}),$ see \cite{Baker2}.\\
\textbf{Construction of the  branching tree} The branching tree was
constructed in \cite{Baker} to study $\mathcal{B}_{\aleph_{0}}$. We
now provide details of its construction. Suppose $x\in I_q$ and
$\Omega_q(x)$ (or $\Sigma_q(x) $ ) is infinite. There exists a
unique minimal $\mathbf{a}\in \bigcup_{n=0}^\infty\{T_{q,0},
T_{q,1}\}^n$ such that $\mathbf{a}(x)\in S_q.$ Then there are two
possibilities.
\\
\textbf{Case $1$:} There exists a unique $i\in\{0,1\}$ such that $
\Omega_q(T_{q,i}(\mathbf{a}(x)))$ is finite and $
\Omega_q(T_{q,1-i}(\mathbf{a}(x)))$ is infinite. In this case, we
draw a horizontal line of finite length that then bifurcates with an
upper and lower branch.  The lower branch corresponds to
$T_{q,i}(\mathbf{a}(x))$ and stops bifurcating, the upper branch
corresponds to $T_{q,1-i}(\mathbf{a}(x))$ and goes on bifurcating.\\
\textbf{Case $2$:} Both $\Omega_q(T_{q,0}(\mathbf{a}(x)))$ and $
\Omega_q(T_{q,1}(\mathbf{a}(x)))$ are infinite. In this case, we
draw a horizontal line of finite length that then bifurcates with an
upper and lower branch.  The lower branch corresponds to
$T_{q,0}(\mathbf{a}(x))$, the upper branch corresponds to
$T_{q,1}(\mathbf{a}(x))$. Both of them go on bifurcating.\\  If
$\Omega_q(T_{q,i}(\mathbf{a}(x)))$ is infinite, as in Case $1$ or
Case $2$, then there exists a unique minimal $\mathbf{a}{'}\in
\bigcup_{n=0}^\infty\{T_{q,0}, T_{q,1}\}^n$ such that point
$\mathbf{a}{'}(T_{q,i}(\mathbf{a}(x)))$ goes to Case $1$ or Case $2$
again. This procedure continues indefinitely. The infinite tree we
construct by repeating this process is known as the infinite
branching tree corresponding to $x$. Fig.1 illustrates the
bifurcating procedure.

\begin{figure}[h]
\begin{center}
\scalebox{0.45}{\includegraphics{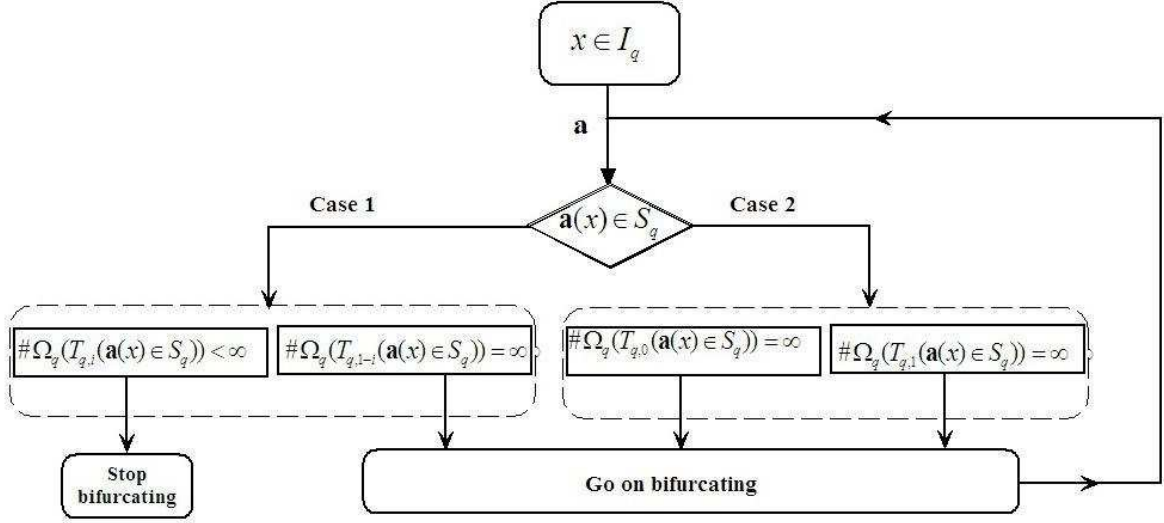}}\\
\footnotesize{\caption{The flow chart of the bifurcating procedure
corresponding to $x\in I_q$  }}\label{fig1}
\end{center}
\end{figure}

A point $x$ for which $\Sigma_{q}(x)$ is infinite is said to be a $q$
\emph{null infinite point} if for each branching point of $x$,
$\mathbf{a}(x)$ never goes to Case $2$. It is easy to check that if
$x$ is a $q$ \emph{null infinite point} then $\#
\Sigma_q(x)=\aleph_0$. The $q$ null infinite points have a critical role in the proofs of the
main results of \cite{Baker}.

\noindent\textbf{Lemma A} [1,  Proposition 2.7] \emph{ $q\in
\mathcal{B}_{\aleph_0}$ if and only if $I_q$ contains a $q$ null
infinite point}

Given $\min\mathcal{B}_2=\check{q}\approx 1.71064,$  $\min
\mathcal{B}_{k}=q_f\approx1.75488$ for $k\geq 3$, and $\mathcal{B}_{2}\cap(\check{q},q_{f}]=\{q_{f}\}$ we have that if
$q\in(\frac{1+\sqrt{5}}{2},q_f)\setminus\{\check{q}\}$ and $x$ is a
$q$ null infinite point, then for each branching point of $x$,
$\mathbf{a}(x)$, we have
$\#\Omega_{q}(T_{q,i}(\mathbf{a}(x)))=\aleph_0$ and
 $\#\Omega_{q}(T_{q,1-i}(\mathbf{a}(x)))=1$. Which implies the following inclusion
\begin{equation}\label{S_q form }\left\{\mathbf{a}(x):\mathbf{a}(x)\in S_q, \mathbf{a}\in\bigcup_{n=0}^\infty\{T_{q,0}, T_{q,1}\}^n\right\}
\subseteq \bigcup_{i=0}^1T^{-1}_{q,i}(U_q)\cap S_q.\end{equation}
Here $U_q$ denotes the set of $x\in I_q$ having a unique
$q$-expansion.

Unfortunately, it is difficult to deal with the set $S_q$. However,
by some deductions we can restrict ourselves
to a smaller set
$J_q:=[\frac{q+q^2}{q^4-1},\frac{1+q^3}{q^4-1}]\subseteq S_q$.
$J_q=S_q$ if and only if $q=q_f$.

 \noindent\textbf{Lemma B} [1,
Lemma 3.1] \emph{Let $q\in[\frac{1+\sqrt{5}}{2},q_f)$. Suppose $x\in
I_q$ satisfies $\#\Sigma_q(x)>1$, then there exists a finite
sequence of transformations $\mathbf{a}$ such that $\mathbf{a}(x)\in
J_q$. Here  $q_f(\approx 1.75488$) is the positive root of
$x^3-2x^2-1=0$.}

Proposition \ref{proposition K} is devoted to characterizing the set $\bigcup_{i=0}^1T^{-1}_{q,i}(U_q)\cap J_q $ when
$q\in[q_1, q_3]$. To prove this proposition we need Lemma C and
Lemma \ref{ykzk_lemma}.

\noindent\textbf{Lemma C} [14, Theorem 2] \emph{Let
$q\in(\frac{1+\sqrt{5}}{2}, q_f)$. Then
$$U_q=\left\{\Pi(0^k(10)^\infty), \;\Pi(1^{k}(10)^\infty),\; 0,\; \frac{1}{q-1}\right\}.$$
Where $k\geq0$.}

 Set \begin{equation*}\mathbf{y}_j=\Pi(01^{j}(10)^\infty)
 \;\;\text{and}\;\;
\mathbf{z}_j=\Pi(10^{j}(01)^\infty) \end{equation*} for $j\geq 1$. Here and
hereafter we let $(S_q\setminus
J_q)_L=[\frac{1}{q},\frac{q+q^2}{q^4-1})$ and $(S_q\setminus
J_q)_R=(\frac{1+q^3}{q^4-1}, \frac{1}{q^2-q}]$.

\begin{lemma}\label{ykzk_lemma}
Let $q\in[q_1, q_3]$. Then the following hold:\\
($i$) $\mathbf{y}_j\in J_q$ if and only if $\mathbf{z}_j\in J_q$,
and $\mathbf{y}_j\in(S_q\setminus J_q)_R $ if and only if
$\mathbf{z}_j\in(S_q\setminus J_q)_L$.\\
  $(ii)$ $T_{q,0}^m(T_{q,1}(\mathbf{y}_j))\in J_q$ if and
only if $ T_{q,1}^m(T_{q,0}(\mathbf{z}_j))\in J_q$, and
$T_{q,0}^m(T_{q,1}(\mathbf{y}_j))\in(S_q\setminus J_q)_R$ if and
only if $T_{q,1}^m(T_{q,0}(\mathbf{z}_j))\in(S_q\setminus J_q)_L$;\\
  ($iii$) $T_{q,0}^m(T_{q,1}(\mathbf{y}_j))=\mathbf{y}_k$ if and
only if $T_{q,1}^m(T_{q,0}(\mathbf{z}_j))=\mathbf{z}_k$, and
$T_{q,0}^m(T_{q,1}(\mathbf{y}_j))=\mathbf{z}_k$ if and only if
$T_{q,1}^m(T_{q,0}(\mathbf{z}_j))=\mathbf{y}_k$. Here $j\geq 1$ and
$k\geq 1$.
\end{lemma}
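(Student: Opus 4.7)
My plan is to introduce the digit-flip reflection $\phi \colon I_q \to I_q$ defined by $\phi(x) = \frac{1}{q-1} - x$ and exploit the fact that the entire setup is equivariant under $\phi$. On the level of expansions this map sends $(\delta_i)$ to $(1-\delta_i)$, so $\phi(\mathbf{y}_j) = \Pi(10^j(01)^\infty) = \mathbf{z}_j$ holds directly from the definitions. A one-line computation $q\bigl(\tfrac{1}{q-1}-x\bigr) - 1 = \tfrac{1}{q-1} - qx$ gives the intertwining $\phi \circ T_{q,0} = T_{q,1} \circ \phi$ on $\bigl[0,\tfrac{1}{q^2-q}\bigr]$, and symmetrically $\phi \circ T_{q,1} = T_{q,0} \circ \phi$ on $\bigl[\tfrac{1}{q},\tfrac{1}{q-1}\bigr]$. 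Since $\phi\bigl(\tfrac{1}{q^2-q}\bigr) = \tfrac{1}{q}$, the map $\phi$ also interchanges the domains of the two partial maps, so the iterations $T_{q,0}^m T_{q,1}$ and $T_{q,1}^m T_{q,0}$ are conjugate by $\phi$ and the required compositions are simultaneously well-defined at $\mathbf{y}_j$ and $\mathbf{z}_j$.

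The second ingredient is the symmetry of the distinguished subsets of $S_q$ under $\phi$. The intervals $S_q = \bigl[\tfrac{1}{q},\tfrac{1}{q^2-q}\bigr]$ and $J_q = \bigl[\tfrac{q+q^2}{q^4-1},\tfrac{1+q^3}{q^4-1}\bigr]$ are both centred at the unique fixed point $\tfrac{1}{2(q-1)}$ of $\phi$; the setwise symmetry of $J_q$ reduces to the single arithmetic identity $\tfrac{1}{q-1} - \tfrac{q+q^2}{q^4-1} = \tfrac{1+q^3}{q^4-1}$, which is immediate from $\tfrac{1}{q-1} = \tfrac{1+q+q^2+q^3}{q^4-1}$. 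Consequently $\phi$ preserves $J_q$ setwise and swaps $(S_q \setminus J_q)_L$ with $(S_q \setminus J_q)_R$.

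Given these two pieces, each part of the lemma is a direct application. For (i), apply $\phi$ to the membership assertions for $\mathbf{y}_j$ and use $\phi(\mathbf{y}_j) = \mathbf{z}_j$ together with the set-level symmetry. For (ii), replace $\mathbf{y}_j$ by $T_{q,0}^m T_{q,1}(\mathbf{y}_j)$ and use the iterated intertwining $\phi\bigl(T_{q,0}^m T_{q,1}(\mathbf{y}_j)\bigr) = T_{q,1}^m T_{q,0}(\mathbf{z}_j)$. For (iii), apply $\phi$ to each claimed equation; since $\phi$ exchanges $\mathbf{y}_k$ with $\mathbf{z}_k$, the right-hand side swaps in exactly the pattern the lemma asserts. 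The only piece of non-formal content is the setwise symmetry of $J_q$, which is the single identity above; everything else is bookkeeping guaranteed by the intertwining and the exchange of domains, so no substantial obstacle arises.
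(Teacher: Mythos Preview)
Your argument is correct and is in substance the same symmetry the paper exploits, but you have packaged it more cleanly. The paper's proof simply records, as ``direct computation,'' a list of identities such as
\[
\mathbf{y}_j-\tfrac{1+q^3}{q^4-1}=-\mathbf{z}_j+\tfrac{q+q^2}{q^4-1},\qquad
T_{q,0}^m(T_{q,1}(\mathbf{y}_j))-\tfrac{1+q^3}{q^4-1}=-T_{q,1}^m(T_{q,0}(\mathbf{z}_j))+\tfrac{q+q^2}{q^4-1},
\]
and then reduces the two equations in part~(iii) to the same polynomial relation by expanding both sides. All of these identities are, of course, exactly the statement $\phi(\mathbf{y}_j)=\mathbf{z}_j$ together with $\phi\circ T_{q,0}=T_{q,1}\circ\phi$ and the $\phi$-symmetry of the endpoints of $J_q$ and $S_q$; the paper simply writes out the consequences coordinatewise rather than naming the involution. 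Your route has the advantage of proving the intertwining once and then reading off (i)--(iii) uniformly, and it makes transparent why the compositions $T_{q,0}^mT_{q,1}$ and $T_{q,1}^mT_{q,0}$ are simultaneously defined; the paper's version, by contrast, requires no new object and is literally a line-by-line check.
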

\begin{proof}
Direct computation shows the following equations hold.
\begin{equation}\label{ykzk_lemma euq1}\mathbf{y}_j-\frac{1+q^3}{q^4-1}=-\mathbf{z}_j+\frac{q+q^2}{q^4-1},\;\;\mathbf{y}_j-\frac{q+q^2}{q^4-1}=\frac{1+q^3}{q^4-1}-\mathbf{z}_j,
\;\;\frac{1}{q^2-q}-\mathbf{y}_j=\mathbf{z}_j-\frac{1}{q},\end{equation}
\begin{equation}\label{ykzk_lemma euq2}
T_{q,0}^m(T_{q,1}(\mathbf{y}_j))-\frac{1+q^3}{q^4-1}=-T_{q,1}^m(T_{q,0}(\mathbf{z}_j))+\frac{q+q^2}{q^4-1},
\end{equation}
\begin{equation}\label{ykzk_lemma euq3}T_{q,0}^m(T_{q,1}(\mathbf{y}_j))-\frac{q+q^2}{q^4-1}=\frac{1+q^3}{q^4-1}-T_{q,1}^m(T_{q,0}(\mathbf{z}_j)),\;\; \frac{1}{q^2-q}-T_{q,0}^m(T_{q,1}(\mathbf{y}_j))=T_{q,1}^m(T_{q,0}(\mathbf{z}_j))-\frac{1}{q}.\end{equation}

$(i)$ is implied by equation (\ref{ykzk_lemma euq1}), and $(ii)$ is
implied by equations (\ref{ykzk_lemma euq2}) and (\ref{ykzk_lemma
euq3}).

Simplifying $T_{q,0}^m(T_{q,1}(\mathbf{y}_j))=\mathbf{y}_k$ and
$T_{q,1}^m(T_{q,0}(\mathbf{z}_j))=\mathbf{z}_k,$ we see that they are both equivalent to
$$(1+2q^{-1}-q-q^{-1-j}-q^{-2-m}-q^{-1-m}+q^{-2-k-m})(q^2-1)^{-1}=0.$$
Similarly, simplifying $T_{q,0}^m(T_{q,1}(\mathbf{y}_j))=\mathbf{z}_k$ and
$T_{q,1}^m(T_{q,0}(\mathbf{z}_j))=\mathbf{y}_k$, we see that they are both equivalent to
$$(-1-2q^{-1}+q+q^{-1-j}-q^{-2-m}+q^{-2-k-m}+q^{-m})(q^2-1)^{-1}=0.$$
Thus we obtain $(iii)$.
\end{proof}

\begin{proposition}\label{proposition K}
Let $q\in[q_1, q_3]$. Then we have
\begin{eqnarray*}
\begin{array}{lll}
&\bigcup\limits_{i=0}^1T_{q,i}^{-1}(U_q)\cap J_q =\{\mathbf{y}_j,
\mathbf{z}_j, 1\leq j\leq 3 \}=\left\{\Pi(01^j(10)^\infty),
\Pi(10^j(01)^\infty), 1\leq j\leq
3\right\},\\
&\bigcup\limits_{i=0}^1T_{q,i}^{-1}(U_q)\cap (S_q\setminus J_q)_R
=\{\mathbf{y}_j, j\geq 4, \;1/(q^2-q)\}=\left\{\Pi(01^j(10)^\infty), j\geq 4,\;1/(q^2-q)\right\},\\
&\bigcup\limits_{i=0}^1T_{q,i}^{-1}(U_q)\cap (S_q\setminus
J_q)_L=\{\mathbf{z}_j, j\geq 4, \;1/q\}
=\left\{\Pi(10^j(01)^\infty), j\geq 4, \;1/q\right\}.
\end{array}
 \end{eqnarray*}
\end{proposition}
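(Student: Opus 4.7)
My strategy is to enumerate every point of $\bigcup_{i=0}^{1}T_{q,i}^{-1}(U_q)$ explicitly as a $q$-expansion and then classify each by its location in $S_q$. Since $T_{q,0}^{-1}$ and $T_{q,1}^{-1}$ act on expansions by prepending the digit $0$ and $1$ respectively, Lemma~C implies that $\bigcup_{i=0}^{1}T_{q,i}^{-1}(U_q)$ consists of the two families $\Pi(0^{k}(10)^{\infty})$ and $\Pi(1^{k}(10)^{\infty})$ for $k\geq 1$, together with the points $\mathbf{y}_{j}$ for $j\geq 1$, the points $\mathbf{z}_{j}$ for $j\geq 1$ (using the identity $\Pi(10^{j+1}(10)^{\infty})=\Pi(10^{j}(01)^{\infty})=\mathbf{z}_{j}$), and the four scalars $0,\,1/q,\,1/(q^{2}-q),\,1/(q-1)$.

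Next I would discard every candidate outside $S_q$. Short geometric-series computations give
\[
\Pi(0^{k}(10)^{\infty})=\frac{q^{1-k}}{q^{2}-1},\qquad \Pi(1^{k}(10)^{\infty})\geq \frac{1}{q}+\frac{1}{q^{2}-1},
\]
and on $[q_{1},q_{3}]$ both $q^{1-k}/(q^{2}-1)<1/q$ (equivalent to $q^{2}-q-1>0$) and $1/q+1/(q^{2}-1)>1/(q^{2}-q)$ (equivalent to $q^{2}>2$) hold, so these families miss $S_q$ entirely. The scalars $0$ and $1/(q-1)$ obviously lie outside $S_q$, while $1/q$ is the left endpoint of $S_q$ (hence in $(S_q\setminus J_q)_L$) and $1/(q^{2}-q)$ is the right endpoint (hence in $(S_q\setminus J_q)_R$). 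This accounts for the scalar entries of the proposition and reduces the remaining work to classifying the $\mathbf{y}_{j}$'s and $\mathbf{z}_{j}$'s.

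By Lemma~\ref{ykzk_lemma}(i) the classification of the $\mathbf{y}_j$'s into $J_q$ versus $(S_q\setminus J_q)_R$ determines the corresponding classification of the $\mathbf{z}_j$'s into $J_q$ versus $(S_q\setminus J_q)_L$, so I focus on the $\mathbf{y}_j$'s. A direct computation gives $\mathbf{y}_{j+1}-\mathbf{y}_{j}=1/(q^{j+2}(q+1))>0$ and $\mathbf{y}_{j}\to 1/(q^{2}-q)$ as $j\to\infty$, so every $\mathbf{y}_{j}$ lies in $S_q$ and the problem reduces to locating the largest $j$ with $\mathbf{y}_{j}\leq (1+q^{3})/(q^{4}-1)$.

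The main obstacle is to show this largest index equals $3$ uniformly for $q\in[q_{1},q_{3}]$. Clearing denominators, $\mathbf{y}_{3}\leq (1+q^{3})/(q^{4}-1)$ becomes $q^{7}-q^{6}-q^{5}-q^{3}+q^{2}+1\geq 0$, and polynomial division yields the factorization
\[
q^{7}-q^{6}-q^{5}-q^{3}+q^{2}+1=(q-1)(q^{6}-q^{4}-q^{3}-2q^{2}-q-1),
\]
whose second factor is precisely the defining polynomial of $q_{1}$. Thus the inequality is tight exactly at $q=q_{1}$ --- which is the structural reason the range begins at $q_1$ --- and it is strictly satisfied on $(q_{1},q_{3}]$ once one checks that no further positive root of that factor lies in the interval. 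The complementary inequality $\mathbf{y}_{4}>(1+q^{3})/(q^{4}-1)$ similarly reduces to $q^{8}-q^{7}-q^{6}-q^{4}+q^{2}+1<0$; reducing this modulo the defining equation of $q_{1}$ yields the strictly negative value $-(q_{1}-1)(q_{1}^{2}+1)$, while reducing modulo that of $q_{3}$ yields $-q_{3}^{3}+q_{3}^{2}+1<0$, and the sign propagates through the interior by a single-variable monotonicity argument on the reduced expression. Combining these polynomial verifications with the symmetry from Lemma~\ref{ykzk_lemma}(i) delivers the three identities of the proposition.
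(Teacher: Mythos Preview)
Your overall architecture coincides with the paper's: invoke Lemma~C to list $\bigcup_{i}T_{q,i}^{-1}(U_q)$, discard the families $\Pi(0^{k}(10)^{\infty})$ and $\Pi(1^{k}(10)^{\infty})$ and the endpoints $0,1/(q-1)$ as lying outside $S_q$, and then reduce the classification of the $\mathbf{z}_j$'s to that of the $\mathbf{y}_j$'s via Lemma~\ref{ykzk_lemma}(i). Where you diverge is in the verification of the threshold $j=3$ versus $j=4$. The paper rewrites $\mathbf{y}_j-\tfrac{1+q^3}{q^4-1}$ as a single expression in $q$ and $j$, solves for $j$ in terms of $q$ via a logarithm, and then observes that the resulting function $\ln\!\bigl((q^2+1)/(-q^4+q^3+q^2+1)\bigr)/\ln q$ is strictly increasing on $[q_1,q_4]$, taking the value $3$ at $q_1$ and $4$ at $q_4\approx 1.69784>q_3$; this handles every $j$ simultaneously and shows that the proposition in fact holds on the slightly larger interval $[q_1,q_4)$. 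Your route instead treats $j=3$ and $j=4$ as separate polynomial inequalities: the factorisation $q^7-q^6-q^5-q^3+q^2+1=(q-1)(q^6-q^4-q^3-2q^2-q-1)$ is genuinely illuminating, since it identifies the defining polynomial of $q_1$ and explains transparently why the interval begins there. For $j=4$ your endpoint reductions are correct, but the phrase ``the sign propagates through the interior by a single-variable monotonicity argument on the reduced expression'' is not quite self-contained---the reduced expressions live at isolated algebraic points, so you still owe a one-line check that $q^8-q^7-q^6-q^4+q^2+1$ has no root in $(q_1,q_3)$. You should also make explicit the easy inequality $\mathbf{y}_1\geq(q+q^2)/(q^4-1)$ (equivalent to $q\geq 1$), since without it your reduction ``locate the largest $j$ with $\mathbf{y}_j\leq(1+q^3)/(q^4-1)$'' tacitly assumes no $\mathbf{y}_j$ falls into $(S_q\setminus J_q)_L$.
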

\begin{proof}
It follows from Lemma C that
$$\bigcup\limits_{i=0}^1T_{q,i}^{-1}(U_q)=\left\{\Pi(0^{j}(10)^\infty), \Pi(01^{j-1}(10)^\infty), \Pi(10^{j-1}(10)^\infty), \Pi(1^{j}(10)^\infty),
0, \frac{1}{q^2-q}, \frac{1}{q}, \frac{1}{q-1}\right\}.$$  Here
$j\geq 1$.  Then by some straightforward computation we have
\begin{equation}\label{proposition K equ3}\bigcup\limits_{i=0}^1T_{q,i}^{-1}(U_q)\cap
S_q=\left\{\mathbf{y}_j,\;\mathbf{z}_j,\; j\geq 1,
\frac{1}{q^2-q},\; \frac{1}{q} \right\}.\end{equation} Next, we
prove that
\begin{eqnarray}\label{proposition K equ4}
\mathbf{y}_j-\frac{1+q^3}{q^4-1}\left\{\begin{array}{ll}
&\leq 0\;\; \text{when} \;\;j<4\\
&>0\;\; \text{when}\;\; j\geq 4
\end{array}\right.
 \end{eqnarray}
when $q\in[q_1,q_3]$.
 It suffices to show that
 \begin{eqnarray}\label{proposition K equ5}
q^{-1}+q+q^2-q^3-q^{-1-j}-q^{1-j}\left\{\begin{array}{ll}
&\leq 0\;\; \text{when} \;\;j<4\\
&>0\;\; \text{when}\;\; j\geq 4.
\end{array}\right.
 \end{eqnarray}
By noting that
$$\mathbf{y}_j-\frac{1+q^3}{q^4-1}=(q^{-1}+q+q^2-q^3-q^{-1-j}-q^{1-j})(q^4-1)^{-1},$$
we see that (\ref{proposition K equ5}) is equivalent to
 \begin{eqnarray}\label{proposition K equ6}
\ln\frac{(q^2+1)}{-q^4+q^3+q^2+1}(\ln
q)^{-1}\left\{\begin{array}{ll}
&\geq j\;\; \text{when} \;\;j<4\\
&<j\;\; \text{when}\;\; j\geq 4.
\end{array}\right.
 \end{eqnarray}
The inequalities in (\ref{proposition K equ6}) are true when
$q\in[q_1,q_3]$, since $\ln\frac{(q^2+1)}{-q^4+q^3+q^2+1}(\ln
q)^{-1}$ is strictly increasing on the interval $[q_1, q_4]$, and
$\ln\frac{(q_4^2+1)}{-q_{4}^4+q_4^3+q_4^2+1}(\ln{q_4})^{-1}=4$ and
$\ln\frac{(q_1^2+1)}{-q_1^4+q_1^3+q_1^2+1}(\ln{q_1})^{-1}=3$. Here
 $q_4(\thickapprox 1.69784)$ is the positive root of
$x^7-x^5-x^4-2x^3-2x^2-x-1=0$. A direct computation shows that
$\mathbf{y}_j\in[\frac{q+q^2}{q^4-1},\frac{1}{q^2-q}]$ for all
$j\geq 1$. This statement combined with $(\ref{proposition K equ4})$
shows that $\mathbf{y}_j\in J_q$ if $1\leq j\leq 3$ and
$\mathbf{y}_j\in(S_q\setminus J_q)_R$ if $j\geq 4$. Using $(i)$ of
Lemma \ref{ykzk_lemma}, we also have $\mathbf{z}_j\in J_q$ if $1\leq
j\leq 3$ and $\mathbf{z}_j\in(S_q\setminus J_q)_L$ if $j\geq 4$. Our
proof now follows from (\ref{proposition K equ3}).
\end{proof}

\section{Proof of Theorem \ref{main theorem2}}
In this section we shall give a algorithm to find all elements of the set $\mathcal{B}_{\aleph_0}\cap[q_1, q_3]$. Recall that $\min\mathcal{B}_2=\check{q}\approx 1.71064$.
\begin{theorem}\label{main theorem2_2}
Let $q\in[q_1,q_f)\setminus\{\check{q}\}$ and suppose
$\bigcup\limits_{i=0}^1T_{q,i}^{-1}(U_q)\cap J_q$ is  a finite set.
Then $q\in\mathcal{B}_{\aleph_0}$ if and only if there exists $w\in
\bigcup\limits_{i=0}^1T_{q,i}^{-1}(U_q)\cap J_q$ satisfying the
following two properties. \\
$(i)$There exist a finite sequence of
transformations $\mathbf{b}\in \bigcup_{n=0}^\infty\{T_{q,0},
T_{q,1}\}^n$ such that
$$
\mathbf{b}(w)=w. $$
  $(ii)$ Let $\mathbf{b}$ be as above. Define $$B(w):=\{\mathbf{b}|i: \mathbf{b}|i(w)\in S_q, 1\leq i\leq
|\mathbf{b}|\}.$$ Then $B(w)\subset
\bigcup\limits_{i=0}^1T_{q,i}^{-1}(U_q)\cap S_q.$ Here
$|\mathbf{b}|$ denotes the length of $\mathbf{b}$.
\end{theorem}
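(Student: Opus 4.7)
The plan is to prove the two implications separately, using Lemma~A as the bridge between $\mathcal{B}_{\aleph_0}$-membership and the existence of null infinite points, together with Lemma~B and the structural inclusion~(\ref{S_q form }).

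For the sufficiency direction, assuming $w$ satisfies (i) and (ii), I would show directly that $\#\Sigma_q(w)=\aleph_0$. A preliminary observation is that $w\in J_q\subseteq S_q$ while $U_q\cap S_q=\emptyset$ for $q>\frac{1+\sqrt{5}}{2}$ (by inspection of the explicit form of $U_q$ in Lemma~C), so $w\notin U_q$. The key step is to rule out that any intermediate point $\mathbf{b}|k(w)$ along one cycle lies in $U_q$: if it did, then since $U_q\subseteq I_q\setminus S_q$ admits only its unique applicable transformation, and $U_q$ is forward invariant under it, one obtains $\mathbf{b}|m(w)\in U_q$ for all $m\geq k$ and hence $w=\mathbf{b}(w)\in U_q$, a contradiction. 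Combining this with~(ii), at every branching point $\mathbf{b}|k(w)$ on the cycle the direction \emph{not} used by $\mathbf{b}$ lands in $U_q$ (a terminating branch, contributing a single expansion), while the direction used by $\mathbf{b}$ continues along the spine. Iterating $\mathbf{b}$ then generates an infinite branching tree of Case~1 type at every node, so $w$ is null infinite; one expansion follows the spine forever and each of the countably many spine branching points contributes a distinct additional expansion via its unique $U_q$-exit, for a total of exactly $\aleph_0$.

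For the necessity direction, suppose $q\in\mathcal{B}_{\aleph_0}$, so by Lemma~A there is a null infinite point $y\in I_q$. Using $\min\mathcal{B}_2=\check{q}$, $\min\mathcal{B}_k=q_f$ for $k\geq 3$, and $\mathcal{B}_2\cap(\check{q},q_f]=\{q_f\}$, the discussion following Lemma~A forces every branching point of $y$ to lie in $\bigcup_{i=0}^{1}T_{q,i}^{-1}(U_q)\cap S_q$. Lemma~B then provides a first branching point $w_0\in J_q$ on the spine of $y$; since the continuing direction at $w_0$ is itself null infinite (in particular has more than one expansion), Lemma~B applies again to yield a next $J_q$ branching point $w_1$ along the spine, and so on. This produces an infinite sequence $(w_k)_{k\geq 0}$ inside the finite set $\bigcup_{i=0}^{1}T_{q,i}^{-1}(U_q)\cap J_q$, and a pigeonhole argument yields $w_i=w_j$ for some $i<j$. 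Taking $w:=w_i$ and letting $\mathbf{b}$ be the transformation sequence along the spine from $w_i$ to $w_j$ gives $\mathbf{b}(w)=w$, establishing~(i); the intermediate branching points traversed by $\mathbf{b}$ are branching points of $y$, so by~(\ref{S_q form }) they lie in $\bigcup_{i=0}^{1}T_{q,i}^{-1}(U_q)\cap S_q$, giving~(ii).

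The main obstacle is in the sufficiency direction: one must correctly identify which of the two branches at each $\mathbf{b}|k(w)$ is the terminating one, and this hinges on the forward-invariance argument showing that the continuing direction cannot enter $U_q$. Once this identification is made the null-infinite structure is transparent and the $\aleph_0$ count follows automatically. The necessity direction is essentially a pigeonhole on the finite set $\bigcup_{i=0}^{1}T_{q,i}^{-1}(U_q)\cap J_q$, once one observes that the null-infinite spine can be repeatedly returned to $J_q$ via Lemma~B.
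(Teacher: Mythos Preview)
Your proposal is correct and follows essentially the same approach as the paper. The necessity direction is identical: Lemma~A produces a null infinite point, Lemma~B repeatedly returns its spine to the finite set $\bigcup_{i}T_{q,i}^{-1}(U_q)\cap J_q$, and pigeonhole yields the periodic $\mathbf{b}$; inclusion~(\ref{S_q form }) then gives~(ii). For sufficiency the paper simply asserts in one line that (i) and (ii) force $w$ to be null infinite, whereas you supply the missing mechanism---the forward-invariance of $U_q$ under the unique admissible map, which pins down that the branch \emph{not} taken by $\mathbf{b}$ is the one landing in $U_q$---so your write-up is in fact more complete than the paper's on this point.
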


\begin{proof}
We begin with the rightwards implication. Suppose $q\in \mathcal{B}_{\aleph_0}$. By Lemma A,
there exists $x\in(0,\frac{1}{q-1})$ such that $x$ is a $q$ null
infinite point. Furthermore, we may assume $x\in
\bigcup_{i=0}^1T_{q,i}^{-1}(U_q)\cap J_q$ by Lemma B and equation
(\ref{S_q form }). Repeatedly applying Lemma B and equation
(\ref{S_q form }), there exist
$\mathbf{a}^1,...,\mathbf{a}^m\in\bigcup_{n=0}^\infty\{T_{q,0}, T_{q
1}\}^n$ satisfying
$$ \mathbf{a}^i(\mathbf{a}^{i-1}(...(\mathbf{a}^1(x))...))\in
\bigcup\limits_{i=0}^1T_{q,i}^{-1}(U_q)\cap J_q$$ for each $1\leq
i\leq m$. Note that the set $\bigcup_{i=0}^1T_{q,i}^{-1}(U_q)\cap
J_q$ is finite. Therefore by the pigeonhole principle for $m$ sufficiently large, there must exist $i,
j\in \mathbb{N}$ with $1\leq i<j\leq m$ such that
\begin{equation}\label{theorem 3 equ2} \mathbf{a}^j(\mathbf{a}^{j-1}(...(\mathbf{a}^1(x))...))=
\mathbf{a}^i(\mathbf{a}^{i-1}(...(\mathbf{a}^1(x))...)).\end{equation}
Set $\mathbf{b}=\mathbf{a}^{i+1}\cdots\mathbf{a}^{j}$,
$\hat{\mathbf{a}}=\mathbf{a}^1\cdots\mathbf{a}^i$ and suppose
$\hat{\mathbf{a}}(x)=w$, here $w\in
\bigcup\limits_{i=0}^1T_{q,i}^{-1}(U_q)\cap J_q$. Then equation
(\ref{theorem 3 equ2}) implies that
$$\mathbf{b}(w)=w.$$ Since $x$ is a $q$ null infinite point so are all its branching points. Thus we have
the second property.

To complete our if and only if it suffices to remark that $(i)$ and
$(ii)$ imply that $w$ is a $q$ null infinite point. So $q\in
\mathcal{B}_{\aleph_0}$.
\end{proof}

Now we search for all points belonging to $\mathcal{B}_{\aleph_0}\cap[q_1, q_3]$ by applying Theorem
\ref{main theorem2_2}. Suppose $x$ is a q null infinite point and
$x\in J_q$. First we point out that
$$\bigcup\limits_{i=0}^1T_{q,i}^{-1}(U_q)\cap
J_q=\{\mathbf{y}_j=\Pi(01^{j}(10)^\infty),
\mathbf{z}_j=\Pi(10^{j}(01)^\infty), 1\leq j\leq 3\}$$ when
$q\in[q_1, q_3]$ by Proposition \ref{proposition K}. By Theorem
\ref{main theorem2_2},  we only need to consider the behavior of elements of $\bigcup\limits_{i=0}^1T_{q,i}^{-1}(U_q)\cap J_q$ under
maps belonging to $\bigcup_{n=0}^\infty\{T_{q,0}, T_{q,1}\}^n$.
 Without loss of generality we only need to consider the points
$\mathbf{y}_j$, for  $1\leq j\leq 3$ . We establish the following
lemma.

\begin{lemma}\label{ykzk_lemma2}
(i)If $q\in[q_1, q_3]$ and $\mathbf{y}_1$ is a q null infinite
point, then
\begin{equation}\label{ykzk_equ1}T_{q,1}(\mathbf{y}_1)\in L_q,\;\;
T_{q,0}^j(T_{q,1}(\mathbf{y}_1))\in L_q,  j=1,2,\;\;
T_{q,0}^3(T_{q,1}(\mathbf{y}_1))\in
\bigcup\limits_{i=0}^1T_{q,i}^{-1}(U_q)\cap J_q\end{equation}\\
(ii) If $q\in[q_1, q')$ then $\mathbf{y}_2$ is not a q null
infinite point. If $q\in[q', q_3]$ and $\mathbf{y}_2$ is a q null
infinite point, then
\begin{equation}\label{ykzk_equ2}T_{q,1}(\mathbf{y}_2)\in L_q,\;\;
T_{q,0}(T_{q,1}(\mathbf{y}_2))\in L_q,\;\;
T_{q,0}^2(T_{q,1}(\mathbf{y}_2))\in\bigcup\limits_{i=0}^1T_{q,i}^{-1}(U_q)\cap
J_q.
\end{equation}\\
Here $q'(\thickapprox 1.66184)$ is the positive root of
 $x^5-x^3-x^2-2x-2=0$.\\
(iii)If $q=q_1$ and $\mathbf{y}_3$ is a q null infinite point, then
\begin{equation}\label{ykzk_equ3}T_{q,1}(\mathbf{y}_3)\in L_q,\;\;
T_{q,0}(T_{q,1}(\mathbf{y}_3))\in\bigcup\limits_{i=0}^1T_{q,i}^{-1}(U_q)\cap
J_q.
\end{equation}
 If $q\in(q_1, q_3]$ and
$\mathbf{y}_3$ is a q null infinite point, then $q$ must be $q_3$ or
$q''(\approx 1.67365)$,
 the positive root of $x^5-2x^4+x^3-x^2+x-1=0$. \\
\end{lemma}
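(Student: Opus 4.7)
The plan is to apply Theorem~\ref{main theorem2_2} in the contrapositive direction: if $\mathbf{y}_j$ is a $q$ null infinite point, then every return of its orbit to $S_q$ must land in $\bigcup_{i=0}^{1}T_{q,i}^{-1}(U_q)\cap S_q$, and by Proposition~\ref{proposition K} this forces the landing point to belong to the explicit list $\{\mathbf{y}_k,\mathbf{z}_k,1/q,1/(q^2-q)\}$ (with $1\leq k\leq 3$ inside $J_q$, or $k\geq 4$ outside). The strategy is to compute the orbit of $\mathbf{y}_j$ explicitly and impose this constraint at the first return to $S_q$.

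Since $\mathbf{y}_j=\Pi(01^j(10)^\infty)$, Lemma~C gives $T_{q,0}(\mathbf{y}_j)=\Pi(1^j(10)^\infty)\in U_q$, so at $\mathbf{y}_j$ itself the finite branch is $T_{q,0}$ and the null-infinite orbit must continue along $T_{q,1}$. A direct summation yields $\Pi(1^j(10)^\infty)=(q+1-q^{-j})/(q^2-1)$, whence
\[
T_{q,1}(\mathbf{y}_j)=\frac{q+2-q^2-q^{-j}}{q^2-1}.
\]
While the orbit stays in $L_q$ only $T_{q,0}$ can be applied, and the iterates take the closed form $q^{m}T_{q,1}(\mathbf{y}_j)$. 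The $L_q$-memberships asserted in (\ref{ykzk_equ1})--(\ref{ykzk_equ3}) then reduce to polynomial inequalities $q^{m+1}T_{q,1}(\mathbf{y}_j)<1$, which are checked by monotonicity of the relevant polynomial in $q$ over the appropriate subinterval of $[q_1,q_3]$.

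Once the first $S_q$-hit has been pinpointed, I impose the null-infinite constraint: the landing point must coincide with one of the elements in Proposition~\ref{proposition K}'s list. Equating the explicit iterate $q^{m}T_{q,1}(\mathbf{y}_j)$ with each candidate produces an algebraic equation in $q$. For $j=1$ the first exit from $L_q$ happens at the fourth step; showing that the landing point lies in $J_q$ throughout $[q_1,q_3]$ is the content of~(i). For $j=2$ the first $S_q$-hit happens at the third step and its location as a function of $q$ crosses from $(S_q\setminus J_q)_R$ into $J_q$ precisely when $q^{2}T_{q,1}(\mathbf{y}_2)=(1+q^3)/(q^4-1)$, an identity which after clearing denominators factors as $(q-1)(q^5-q^3-q^2-2q-2)=0$; the relevant root is $q'$, and for $q\in[q_1,q')$ the landing point lies in $(S_q\setminus J_q)_R$ but does not coincide with any of the permitted candidates there (the $\mathbf{y}_k$ with $k\geq 4$, or $1/(q^2-q)$), which proves the negative part of~(ii); for $q\in[q',q_3]$ the landing point lies in $J_q$, giving~(\ref{ykzk_equ2}). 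For $j=3$ the first exit from $L_q$ occurs already after one $T_{q,0}$; at $q=q_1$ the landing point lies in $J_q$, producing~(\ref{ykzk_equ3}), while for $q\in(q_1,q_3]$ the landing point lies in $(S_q\setminus J_q)_R$ or beyond, and matching with the admissible candidates (and, where needed, following the orbit one further step using Lemma~\ref{ykzk_lemma} to swap $\mathbf{y}_k\leftrightarrow\mathbf{z}_k$) produces the two defining polynomials $x^5-2x^4+x^3-x^2+x-1=0$ (giving $q''$) and $x^5-x^4-x^3-x+1=0$ (giving $q_3$).

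The main technical burden is the case analysis in this last step: for each $j$ and each subinterval of $[q_1,q_3]$ one must identify which candidate from Proposition~\ref{proposition K}'s list is actually hit first, verify that the corresponding matching polynomial has the stated root, and exclude extraneous roots in the relevant subinterval. As in the proof of Proposition~\ref{proposition K}, this comes down to establishing monotonicity in $q$ of certain polynomial or logarithmic expressions, so as to pin down $q'$, $q''$, and $q_3$ as the unique transition values.
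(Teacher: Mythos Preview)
Your overall strategy matches the paper's exactly: compute the forced orbit of $\mathbf{y}_j$ until it first enters $S_q$, then impose the constraint from (\ref{S_q form }) and Proposition~\ref{proposition K}. However, two concrete errors need fixing.

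First, in part~(ii) your one-step argument is insufficient. You assert that for $q\in[q_1,q')$ the point $T_{q,0}^{2}(T_{q,1}(\mathbf{y}_2))\in(S_q\setminus J_q)_R$ ``does not coincide with any of the permitted candidates there''. This is false: at $q\approx 1.65027$ (the root of $x^4-x^3-x^2-x+1$, which lies in $[q_1,q')$) one has $T_{q,0}^{2}(T_{q,1}(\mathbf{y}_2))=\mathbf{y}_4$ exactly. The paper therefore passes to a \emph{second} constraint, requiring in addition that $T_{q,0}(T_{q,1}(T_{q,0}^{2}(T_{q,1}(\mathbf{y}_2))))\in\bigcup_{i}T_{q,i}^{-1}(U_q)\cap J_q$, and then checks (Table~\ref{tab1}) that no $q\in[q_1,q')$ satisfies both constraints simultaneously. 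You invoke this two-step idea only for $j=3$; it is already needed for $j=2$.

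Second, in part~(iii) you place the landing point $T_{q,0}(T_{q,1}(\mathbf{y}_3))$ in $(S_q\setminus J_q)_R$ for $q\in(q_1,q_3]$, but in fact it lies in $(S_q\setminus J_q)_L=[\tfrac{1}{q},\tfrac{q+q^2}{q^4-1})$. Consequently the admissible candidates are $\{\mathbf{z}_k:k\geq 4\}\cup\{1/q\}$, not $\{\mathbf{y}_k:k\geq4\}\cup\{1/(q^2-q)\}$, and the next forced map is $T_{q,1}\circ T_{q,0}$ rather than $T_{q,0}\circ T_{q,1}$. Moreover, here the first constraint $T_{q,0}(T_{q,1}(\mathbf{y}_3))=\mathbf{z}_k$ is satisfied for \emph{infinitely many} $q\in(q_1,q_3]$ (one for each $k\geq4$, since $-\ln(1-q^{-1}+q^2+q^3-q^4)/\ln q\to+\infty$ as $q\to q_3$), so the second-step constraint $T_{q,1}(T_{q,0}(T_{q,0}(T_{q,1}(\mathbf{y}_3))))\in\bigcup_{i}T_{q,i}^{-1}(U_q)\cap J_q$ is essential, and its intersection with the first yields precisely $q''$ and $q_3$ (Table~\ref{tab2}).
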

\begin{proof}
 Direct computation yields $(i)$. We prove $(ii)$ now. A simple computation yields
\begin{eqnarray*}
T_{q,1}(\mathbf{y}_2)\in L_q,\;\; T_{q,0}(T_{q,1}(\mathbf{y}_2))\in
L_q,\;\; T_{q,0}^2(T_{q,1}(\mathbf{y}_2))\in\left\{\begin{array}{ll}
&\Big[\frac{q+q^2}{q^{4}-1},\frac{1+q^{3}}{q^{4}-1}\Big]\;\; \text{if}\;\; q\in [q', q_3]\\
&\Big(\frac{1+q^{3}}{q^{4}-1},\frac{1}{q^2-q}\Big]\;\; \text{if}\;\; q\in[q_1,q').
\end{array}\right.
 \end{eqnarray*}
 Here $q'(\thickapprox 1.66184)$ is the positive root of
 $x^5-x^3-x^2-2x-2=0$. Thus we obtain (\ref{ykzk_equ2}).
We now assume $\mathbf{y}_2$ is $q$ null infinite for some $q\in[q_{1},q')$ and derive a contradiction. If $\mathbf{y}_2$ is a $q$ null infinite point for some $q\in[q_1, q')$ then $T_{q,0}^2(T_{q,1}(\mathbf{y}_2))\in
\bigcup\limits_{i=0}^1T_{q,i}^{-1}(U_q)\cap (S_q\setminus J_q)_R$
and it can be shown that
$$T_{q,0}(T_{q,1}(T_{q,0}^2(T_{q,1}(\mathbf{y}_2))))\in \bigcup\limits_{i=0}^1T_{q,i}^{-1}(U_q)\cap J_q.$$
Therefore by Proposition \ref{proposition K} there exists
$s\in\bigcup\limits_{i=0}^1T_{q,i}^{-1}(U_q)\cap (S_q\setminus
J_q)_R=\{\mathbf{y}_k, k\geq 4, \;1/(q^2-q)\}$ and
$u\in\bigcup\limits_{i=0}^1T_{q,i}^{-1}(U_q)\cap J_q$ such that
\begin{equation}\label{ykzk_equ4}T_{q,0}^2(T_{q,1}(\mathbf{y}_2))=s \;\;\text{and}\;\; T_{q,0}(T_{q,1}(T_{q,0}^2(T_{q,1}(\mathbf{y}_2))))=u.
\end{equation} We will show that this is not possible. That is $\mathbf{y}_2$
is not a $q$ null infinite point when $q\in[q_1, q')$. In fact,
$$T_{q,0}^2(T_{q,1}(\mathbf{y}_2))=s$$ means that
$$q^5-q^4-2q^3+2q+1=q^{-k}.$$ That is
$$k=\frac{-\ln(q^5-q^4-2q^3+2q+1)}{\ln q}.$$
The function $-\ln(q^5-q^4-2q^3+2q+1)(\ln q)^{-1}$ is  strictly
decreasing on the interval $[q_1, q')$. Table \ref{tab1} therefore implies that the only possible value of $k$ that may occur within the interval $[q_{1},q')$ is $k=4$. It is not possible that $s=1/(q^2-q)$, since $\lim\limits_{k\rightarrow \infty}
\mathbf{y}_k=1/(q^2-q)$ and $-\ln(q^5-q^4-2q^3+2q+1)(\ln q)^{-1}$ is monotonic. Table \ref{tab1} includes the values of $q$ for which $T_{q,0}(T_{q,1}(T_{q,0}^2(T_{q,1}(\mathbf{y}_2))))\in \bigcup\limits_{i=0}^1T_{q,i}^{-1}(U_q)\cap J_q$. Inspecting Table \ref{tab1} shows that there are no values of $q$ for which both equations in (\ref{ykzk_equ4}) hold. Therefore we may conclude $(ii)$.

\begin{table}[ht]
\caption{ the values of $q$  for $\mathbf{y}_2$}
\label{tab1}       
\begin{center}
\footnotesize{
\begin{tabular}{l l c c c|c|c|}
       \hline
      & $k$ & $q$ & Polynomials\\
        \hline
           $T_{q,0}^2(T_{q,1}(\mathbf{y}_2))=\mathbf{y}_k$&4    &1.65027&$x^4-x^3-x^2-x+1$\\
         &5    &1.63923&$x^6-2x^4-2x^3+x+1$ \\
        \hline
         $T_{q,0}(T_{q,1}(T_{q,0}^2(T_{q,1}(\mathbf{y}_2))))=\mathbf{y}_k$&1    &1.65637&$x^5-2x^3-x^2-x-1$\\
         &2    &1.64308&$x^5-x^4-x^3-1$ \\
        &3   &1.63420&$x^7-2x^5-x^4-x^2-x-1$\\
        \hline
            $T_{q,0}(T_{q,1}(T_{q,0}^2(T_{q,1}(\mathbf{y}_2))))=\mathbf{z}_k$&1    &1.64114&$x^2-x-1$\\
         &2    &1.65363&$x^5-2x^3-x^2+1$ \\
        &3   &1.66065&$x^5-x^4-x^3+1$\\
         \hline
\end{tabular}}
\end{center}
\end{table}

It remains to prove $(iii)$. By direct computation, we have
\begin{eqnarray*}
T_{q,1}(\mathbf{y}_3)\in L_q,\;\;
T_{q,0}(T_{q,1}(\mathbf{y}_3))\in\left\{\begin{array}{ll}
&\Big[\frac{q+q^2}{q^{4}-1},\frac{1+q^{3}}{q^{4}-1}\Big]\;\; \text{if}\;\; q=q_1\\
&\Big[\frac{1}{q},\frac{q+q^2}{q^{4}-1}\Big)\;\; \text{if}\;\; q\in(q_1,q_3]
\end{array}\right.
 \end{eqnarray*}
 and we obtain (\ref{ykzk_equ3}). Furthermore if $\mathbf{y}_3$ is a $q$ null infinite point for some $q\in(q_{1},q_{3}]$ then it is straightforward to show that
$$T_{q,1}(T_{q,0}(T_{q,0}(T_{q,1}(\mathbf{y}_3))))\in
\bigcup\limits_{i=0}^1T_{q,i}^{-1}(U_q)\cap J_q.$$ Moreover, there must exist  $s\in
\bigcup\limits_{i=0}^1T_{q,i}^{-1}(U_q)\cap (S_q\setminus
J_q)_L=\{\mathbf{z}_k,k\geq 4,\; 1/q\}$ and  $u\in
\bigcup\limits_{i=0}^1T_{q,i}^{-1}(U_q)\bigcap J_q$  such that
\begin{equation}\label{ykzk_equ5} T_{q,0}(T_{q,1}(\mathbf{y}_3))=s \;\;\text{and}\;\; T_{q,1}(T_{q,0}(T_{q,0}(T_{q,1}(\mathbf{y}_3))))=u.
\end{equation} The equation $T_{q,0}(T_{q,1}(\mathbf{y}_3))=s$ means
that
$$k=\frac{-\ln(1-q^{-1}+q^2+q^3-q^4)}{\ln q}.$$
The function $-\ln(1-q^{-1}+q^2+q^3-q^4)(\ln q)^{-1}$ is strictly
increasing on the interval $(q_1, q_3)$ and
\begin{equation}\label{ykzk_equ6}\lim_{q\rightarrow q_{3}}-\ln(1-q^{-1}+q^2+q^3-q^4)(\ln
q)^{-1}=+\infty.\end{equation} Table \ref{tab2} records the first
few solutions of $-\ln(1-q^{-1}+q^2+q^3-q^4)(\ln q)^{-1}=k$. It is
easy to show that the case where $s=1/q$ is only possible  when
$q=q_{3}$.  In Table \ref{tab2}, we also list the $q$'s for which
$T_{q,1}(T_{q,0}(T_{q,0}(T_{q,1}(\mathbf{y}_3))))=u$ holds. By
inspecting Table \ref{tab2} and using the fact
$-\ln(1-q^{-1}+q^2+q^3-q^4)(\ln q)^{-1}$ is increasing with $q,$ we
see that the only values of $q$ for which both equations in
(\ref{ykzk_equ5}) hold simultaneously are $q\approx 1.67365$ and
when $q=q_3$.

\end{proof}

\begin{table}[ht]
\caption{ the values of $q$ for $\mathbf{y}_3$}
\label{tab2}       
\begin{center}
\footnotesize{
\begin{tabular}{l l c c c|c|c|}
       \hline
        & $k$ & $q$ & Polynomials\\
        \hline
         $T_{q,0}(T_{q,1}(\mathbf{y}_3))=\mathbf{z}_k$&4    &1.66041&$x^6-x^5-x^3-x^2-1$\\
         &5    &1.66883&$x^8-x^6-x^5-2x^4-x^3-x^2-x-1$ \\
        &6   &1.67365&$x^5-2x^4+x^3-x^2+x-1$\\
        &7   &1.67644&$x^{10}-x^8-x^7-2x^6-x^5-x^4-x^3-x^2-x-1$\\
        $T_{q,0}(T_{q,1}(\mathbf{y}_3))=q^{-1}$&  &1.68042&$x^5-x^4-x^3-x+1$\\
        \hline
            $T_{q,1}(T_{q,0}(T_{q,0}(T_{q,1}(\mathbf{y}_3))))=\mathbf{y}_k$ &1    &1.68042&$x^5-x^4-x^3-x+1$\\
         &2    &1.65963&$x^7-2x^5-x^4-x^3+x+1$ \\
        &3   &1.64541&$x^6-x^4-x^3-2x^2-x-1$\\
        \hline
           $T_{q,1}(T_{q,0}(T_{q,0}(T_{q,1}(\mathbf{y}_3))))=\mathbf{z}_k$ &1    &1.65462&$x^6-2x^4-x^3-1$\\
         &2    &1.67365&$x^5-2x^4+x^3-x^2+x-1$ \\
        &3   &1.68400&$x^8-2x^6-x^5-x^2-x-1$\\
        \hline

\end{tabular}}
  \end{center}
\end{table}

Table \ref{tab3} lists the values of $q$ for which equations (\ref{ykzk_equ1}-\ref{ykzk_equ3}, \ref{ykzk_equ5}) hold true
independently. In fact, it follows from Lemma \ref{ykzk_lemma2} and
the  symmetric property of $\mathbf{y}_k$ and $\mathbf{z}_k$ shown
in Lemma \ref{ykzk_lemma} that equations in
(\ref{ykzk_equ1}-\ref{ykzk_equ3}, \ref{ykzk_equ5}) give all the
possible values of $q$ such that $q\in
\mathcal{B}_{\aleph_0}\cap[q_1,q_3]$.

\begin{table}[ht]
\caption{ the values of $q$ for $\mathbf{y}_k$ and $ \mathbf{z}_k,
1\leq k\leq 3$}
\label{tab3}       
\begin{center}
\footnotesize{
\begin{tabular}{l l c c c|c|c|}
       \hline
       & $k$ & $q$ & Polynomials\\
        \hline
        $ T_{q,0}^3(T_{q,1}(\mathbf{y}_1))=\mathbf{y}_k$ (or $T_{q,1}^3(T_{q,0}(\mathbf{z}_1))=\mathbf{z}_k$)&1    &1.68042&$x^5-x^4-x^3-x+1$ \\
         &2    &1.65963 &$x^7-2x^5-x^4-x^3+x+1$\\
         &3   &1.64541 &$x^6-x^4-x^3-2x^2-x-1$\\
        \hline
      $T_{q,0}^3(T_{q,1}(\mathbf{y}_1))=\mathbf{z}_k$ (or $T_{q,1}^3(T_{q,0}(\mathbf{z}_1))=\mathbf{y}_k$)&1    &1.65462&$x^6-2x^4-x^3-1$ \\
         &2    &1.67365 &$x^5-2x^4+x^3-x^2+x-1$\\
         &3   &1.68400 &$x^8-2x^6-x^5-x^2-x-1$\\
        \hline
           $T_{q,0}^2(T_{q,1}(\mathbf{y}_2))=\mathbf{y}_k$ (or $T_{q,1}^2(T_{q,0}(\mathbf{z}_2))=\mathbf{z}_k$)&1    &1.72208&$x^4-x^3-x^2-x+1$\\
         &2    &1.68929&$x^6-2x^4-2x^3+x+1$ \\
        &3   &1.6663&$x^6-x^5-x^4-x^3+x^2+1$\\
        \hline
         $T_{q,0}^2(T_{q,1}(\mathbf{y}_2))=\mathbf{z}_k$ (or $T_{q,1}^2(T_{q,0}(\mathbf{z}_2))=\mathbf{y}_k$)&1    &1.67602&$x^5-2x^3-x^2-1$\\
         &2    &1.7049&$x^5-x^4-x^3-1$ \\
        &3   &1.72004&$x^7-2x^5-x^4-x^2-x-1$\\
        \hline
          $T_{q,0}(T_{q,1}(\mathbf{y}_3))=\mathbf{z}_k$ (or $T_{q,1}(T_{q,0}(\mathbf{z}_3))=\mathbf{y}_k)$ &3   &1.64541&$x^6-x^4-x^3-2x^2-x-1$\\
          \hline
           $T_{q,0}(T_{q,1}(\mathbf{y}_3))=\mathbf{z}_6,\;T_{q,1}(T_{q,0}(\mathbf{z}_6))=\mathbf{z}_2$
           \\(or $T_{q,1}(T_{q,0}(\mathbf{z}_3))=\mathbf{y}_6$, $T_{q,0}(T_{q,1}(\mathbf{y}_6))=\mathbf{y}_2$)
         &    &1.67365&$x^5-2x^4+x^3-x^2+x-1$\\
          $T_{q,0}(T_{q,1}(\mathbf{y}_3))=1/q,\;T_{q,1}(T_{q,0}(1/q))=\mathbf{y}_1$
           \\(or $T_{q,1}(T_{q,0}(\mathbf{z}_3))=1/(q^2-q)$ , $T_{q,0}(T_{q,1}(1/(q^2-q)))=\mathbf{z}_1$)
         &  &1.68042&$x^5-x^4-x^3-x+1$\\
        \hline
\end{tabular}}
\end{center}
\end{table}

We  are now ready to prove Theorem \ref{main theorem2}.

\noindent\textbf{Proof of Theorem \ref{main theorem2}.} By applying
Theorem \ref{main theorem2_2} and Lemma \ref{ykzk_lemma2}, we can
find all points belonging to $\mathcal{B}_{\aleph_0}\cap[q_1,
q_3]$ in Table \ref{tab3}. One can see from Table \ref{tab3} that
$$T_{q,0}(T_{q,1}(\mathbf{y}_3))=\mathbf{z}_3, T_{q,1}(T_{q,0}(\mathbf{z}_3))=\mathbf{y}_3, T_{q,1}(T_{q,0}(T_{q,0}(T_{q,1}(\mathbf{y}_3))))=\mathbf{y}_3,$$
when $q_{1}(\approx 1.64541)$, which is the positive root of
$x^6-x^4-x^3-2x^2-x-1=0$,
$$T_{q,0}^3(T_{q,1}(\mathbf{y}_1))=\mathbf{z}_1, T_{q,1}^3(T_{q,0}(\mathbf{z}_1))=\mathbf{y}_1,  T_{q,0}^3(T_{q,1}^4(T_{q,0}(\mathbf{z}_1)))=\mathbf{z}_1,$$
when $q_{2}(\approx 1.65462)$, which is the positive root of
$x^6-2x^4-x^3-1=0$, and
$$T_{q,0}^3(T_{q,1}(\mathbf{y}_1))=\mathbf{y}_1,$$
when $q_{3}(\approx 1.68042)$, which is the positive root of
$x^5-x^4-x^3-x+1=0$. So, the conditions in Theorem \ref{main
theorem2_2} are satisfied when $q=q_1, q_2,q_3$, respectively. That
is $q_{j}\in \mathcal{B}_{\aleph_0}$ for each $1\leq j\leq 3$.

 Finally, it is easy to check that  $q\notin \mathcal{B}_{\aleph_0}$ if $q$ takes the values listed in Table \ref{tab3}, except when $q=q_j, 1\leq j\leq
 3$.
For example, we have
$$T_{q,0}^3(T_{q,1}(\mathbf{y}_1))=\mathbf{y}_2$$  when $q(\approx 1.65963)$ is the positive root of
$x^5-x^4-x^3-x+1=0$. However,
  there exists no point in $\bigcup\limits_{i=0}^1T_{q,i}^{-1}(U_q)\bigcap
J_q$ such that condition $(i)$ of Theorem \ref{main theorem2_2}
holds for this value of $q.$ The other values of $q$ are dealt with similarly.
 $\hfill\Box$

\section{Proof of Theorem \ref{main theorem 1}}
To prove Theorem \ref{main theorem 1} it suffices to prove $1$ has
$\aleph_0$
 $q$-expansions when $q=q_3$ and $1$ a continuum of $q$-expansions when $q=q_1$ and $q=q_2$.
We only prove the case of $q=q_1$, the case of $q=q_2$ can be
verified in a  similar way.

\begin{theorem} If $q_3 (\approx 1.68042)$ is the positive root of
$x^5-x^4-x^3-x+1=0$. Then $1$  is a $q_3$ null infinite point and
therefore has $\aleph_{0}$ $q_3$-expansions.
\end{theorem}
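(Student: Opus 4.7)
The plan is to execute the null-infinite-point strategy used in the proof of Theorem~\ref{main theorem2}, taking $x=1$ in place of an abstract element of $\bigcup_{i=0}^{1}T_{q,i}^{-1}(U_{q})\cap J_{q}$. The idea is that the orbit of $1$ lands on $\mathbf{y}_{1}$ after a single forced transformation and then enters the length-four loop
\[
\mathbf{y}_{1}\xrightarrow{T_{q,1}}T_{q,1}(\mathbf{y}_{1})
\xrightarrow{T_{q,0}}T_{q,0}(T_{q,1}(\mathbf{y}_{1}))
\xrightarrow{T_{q,0}}T_{q,0}^{2}(T_{q,1}(\mathbf{y}_{1}))
\xrightarrow{T_{q,0}}\mathbf{y}_{1}
\]
that characterises $q_{3}$ in Table~\ref{tab3}.

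First, I would locate $1$ inside $I_{q_{3}}$. Since $1/(q_{3}^{2}-q_{3})\approx 0.8747<1<1/(q_{3}-1)$, we have $1\in R_{q_{3}}$, so the only admissible map at $1$ is $T_{q_{3},1}$, and the first minimal transformation sends $1$ to $q_{3}-1$. The one genuinely new algebraic step is to check $q_{3}-1=\mathbf{y}_{1}$. Using $\mathbf{y}_{1}=q^{-2}+q^{-3}+q^{-3}(q^{2}-1)^{-1}$ and clearing denominators, this identity reduces to $q(q^{5}-q^{4}-q^{3}-q+1)=0$, which holds because $q_{3}$ is a root of $x^{5}-x^{4}-x^{3}-x+1$. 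Hence the first branching point encountered by $1$ is $\mathbf{y}_{1}$ itself, which by Proposition~\ref{proposition K} lies in $J_{q_{3}}$.

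Second, I would analyse the two children of $\mathbf{y}_{1}$. The branch $T_{q_{3},0}(\mathbf{y}_{1})=\Pi(1(10)^{\infty})$ is of the form $\Pi(1^{1}(10)^{\infty})$, hence belongs to $U_{q_{3}}$ by Lemma~C, and therefore contributes a single terminal leaf. The other branch $T_{q_{3},1}(\mathbf{y}_{1})$ and its next two $T_{q_{3},0}$-iterates all lie in $L_{q_{3}}$ by Lemma~\ref{ykzk_lemma2}(i), so they are not branching points, and the identity $T_{q_{3},0}^{3}(T_{q_{3},1}(\mathbf{y}_{1}))=\mathbf{y}_{1}$ from Table~\ref{tab3} returns the orbit to $\mathbf{y}_{1}$.

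Putting the two analyses together, the branching tree of $1$ at $q_{3}$ is a single infinite spine---the forever-repeated loop---with exactly one unique-expansion leaf sprouting at each revisit of $\mathbf{y}_{1}$. Every branching point of $1$ therefore falls under Case~$1$ of the branching-tree construction, so $1$ is by definition a $q_{3}$ null infinite point and consequently $\#\Sigma_{q_{3}}(1)=\aleph_{0}$. The only real obstacle is the algebraic check $q_{3}-1=\mathbf{y}_{1}$; once this is in hand, everything else is a direct application of Lemma~C, Lemma~\ref{ykzk_lemma2} and Table~\ref{tab3}.
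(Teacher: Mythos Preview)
Your argument is correct and follows essentially the same route as the paper's own proof: both identify $T_{q_{3},1}(1)$ as the unique recurring branching point, verify that one child has a unique expansion while the other returns to the same point after three forced $T_{q_{3},0}$-steps, and conclude that $1$ is $q_{3}$ null infinite. Your version makes the identification $T_{q_{3},1}(1)=\mathbf{y}_{1}$ explicit and ties the loop back to Table~\ref{tab3}, whereas the paper simply asserts the orbit structure as a direct computation; one cosmetic point is that Lemma~\ref{ykzk_lemma2}(i) is stated under the hypothesis that $\mathbf{y}_{1}$ is already null infinite, but the $L_{q}$-memberships you quote from it are pure calculations (as the paper's proof of that lemma confirms), so no circularity arises.
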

\begin{proof}
It is straightforward to show that for all $k\geq 0$
$$(T_{q_{3},0}^{3}\circ T_{q_{3},1})^{k}(T_{q_{3},1}(1))\in S_{q_{3}}.$$ Moreover these are the only $\mathbf{a}\in \bigcup_{n=0}^{\infty}\{T_{q_{3},0},T_{q_{3},1}\}^{n}$ such that $\mathbf{a}(1)\in S_{q_{3}}.$ This is sufficient to imply that $1$ is a $q_{3}$ null infinite point . Namely that for each $k\geq 0$ we have $$T_{q_{3},0}\Big((T_{q_{3},0}^{3}\circ T_{q_{3},1})^{k}(T_{q_{3},1}(1))\Big)=\Pi(1(10)^{\infty}).$$ Therefore
$$\Sigma_{q_3}(1)=\left\{1(10^3)^k01(10)^\infty, 1(10^3)^\infty
\right\}.$$
\end{proof}Figure $2$ demonstrates the construction of $\Sigma_{q_3}(1)$.

\begin{figure}[h]
\begin{center}
\scalebox{0.4}{\includegraphics{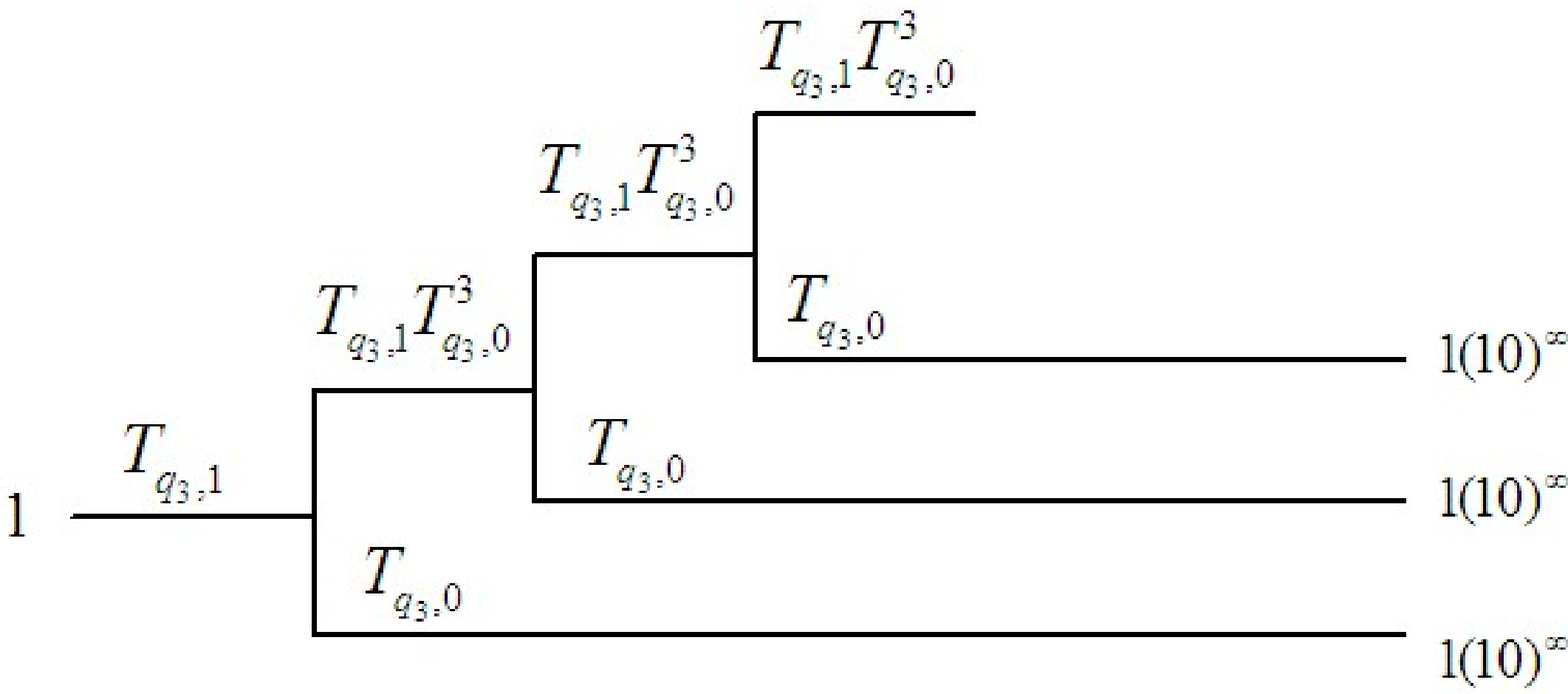}}\\
\footnotesize{\caption{The construction of $\Sigma_{q_3}(1)$
 }}\label{fig1}
\end{center}
\end{figure}

\begin{theorem}\label{main theorem 1_1}
Let $q_1 (\approx 1.64541)$ be the positive solution of the equation
$x^6-x^4-x^3-2x^2-x-1=0$. Then $1$ has $2^{\aleph_{0}}$
$q_1$-expansions.
\end{theorem}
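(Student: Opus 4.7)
The plan is to show that the branching tree of $1$ at $q_1$ contains a complete infinite binary subtree; together with the dichotomy $\#\Sigma_{q_1}(x)\in\{1,\aleph_0,2^{\aleph_0}\}$ for every $x\in I_{q_1}$ (since $q_1<\check{q}=\min\mathcal B_2\approx 1.71064$ excludes every finite cardinality $\ge 2$), this will force $\#\Sigma_{q_1}(1)=2^{\aleph_0}$. Because $1\in R_{q_1}$, the only admissible first step is $T_{q_1,1}(1)=q_1-1$, and direct computation places $q_1-1$ inside $(S_{q_1}\setminus J_{q_1})_L$.

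The first step is to show that $q_1-1$ is a Case~2 branching point. By Proposition \ref{proposition K}, $\bigcup_{i=0}^{1}T_{q_1,i}^{-1}(U_{q_1})\cap(S_{q_1}\setminus J_{q_1})_L=\{\mathbf z_j:j\ge 4\}\cup\{1/q_1\}$; the equation $q_1-1=1/q_1$ would force $q_1=(1+\sqrt 5)/2$, while the equation $q_1-1=\mathbf z_j$, reduced modulo the minimal polynomial $q_1^6-q_1^4-q_1^3-2q_1^2-q_1-1=0$, admits no integer $j\ge 4$. Therefore both $T_{q_1,0}(q_1-1)$ and $T_{q_1,1}(q_1-1)$ lie outside $U_{q_1}$, so by the dichotomy each carries at least $\aleph_0$ $q_1$-expansions, making $q_1-1$ a Case~2 point.

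Next I would trace the orbit along each branch to its next visit to $S_{q_1}$. The $T_{q_1,0}$-branch first enters $q_1^2-q_1\in R_{q_1}$, which forces $T_{q_1,1}$ and produces $q_1^3-q_1^2-1\in J_{q_1}$. The $T_{q_1,1}$-branch first enters $q_1^2-q_1-1\in L_{q_1}$, which forces five applications of $T_{q_1,0}$; the identity $T_{q_1,0}^{5}(q_1^2-q_1-1)=q_1^3-q_1^2-1$ is a direct consequence of the minimal polynomial and exhibits a ``merging'' of both branches into the same point $q_1^3-q_1^2-1$. Applying Proposition \ref{proposition K} again, with $\bigcup_{i}T_{q_1,i}^{-1}(U_{q_1})\cap J_{q_1}=\{\mathbf y_j,\mathbf z_j:1\le j\le 3\}$, and a polynomial check show that $q_1^3-q_1^2-1$ is not on this list, so it too is Case~2. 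Iterating, the next branching points $-q_1^5+q_1^4+2q_1^2$ and $-q_1^5+q_1^4+q_1^2+q_1+1$ are verified to be Case~2 by the same method.

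The main obstacle is sustaining the inductive claim that every branching point ever produced in the orbit of $1$ avoids the countable list $\{\mathbf y_j,\mathbf z_j:j\ge 1\}\cup\{1/q_1,1/(q_1^2-q_1)\}$ of null-infinite preimages. Each non-coincidence reduces to a polynomial identity in $\mathbb Z[q_1]$ modulo the minimal polynomial of $q_1$, handled in the same style as Lemma \ref{ykzk_lemma2} and Table \ref{tab3}; because the iterates form a controlled recursion in $\mathbb Z[q_1]$ (each new branching point is an explicit integer combination of $1,q_1,\ldots,q_1^5$), the verification at each stage is finite. Once this is in place, every infinite path in the branching tree of $1$ passes through infinitely many Case~2 nodes, giving $\#\Sigma_{q_1}(1)\ge 2^{\aleph_0}$, and the dichotomy then yields equality.
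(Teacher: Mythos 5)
Your overall strategy is legitimate in outline (the dichotomy $\#\Sigma_{q_1}(x)\in\{1,\aleph_0,2^{\aleph_0}\}$ below $\check q$ is valid, and a branching point $x\in S_{q_1}$ is Case~2 precisely when $x\notin\bigcup_{i}T_{q_1,i}^{-1}(U_{q_1})$, so your criterion and your first few computations, including the merging $T_{q_1,0}^5(q_1^2-q_1-1)=q_1^3-q_1^2-1$, are correct), but the proof has a genuine gap exactly where you flag ``the main obstacle.'' To conclude you need a statement about \emph{all} branching points of $1$ (or at least enough of them to produce a perfect subtree), and your argument for this is only that ``the verification at each stage is finite.'' That does not close the induction: the branching tree of $1$ has infinitely many nodes, the distinct branching points need not recur, and you give no proof that the orbit set is finite or eventually periodic, nor any uniform algebraic criterion valid for every node at once. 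There is also no reason to expect such closure: $q_1$ is not Pisot (it has the conjugate $q_1^*\approx-1.20458$ of modulus greater than $1$), so the usual mechanism forcing finiteness of such orbit sets is absent, and a node-by-node check of non-coincidence with the countable set $\{\mathbf y_j,\mathbf z_j\}\cup\{1/q_1,1/(q_1^2-q_1)\}$ is an infinite task. Note also that your inductive claim (``every branching point avoids the list'') is stronger than what the theorem needs and could conceivably fail without the theorem failing; what you actually need is a perfect subtree, and neither version is established.

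The paper avoids this infinite verification entirely by arguing by contradiction with an algebraic conjugate. If $1$ had only $\aleph_0$ expansions, then the branching point $(T_{q_1,0}^5\circ T_{q_1,1}^2)(1)\in S_{q_1}$ would also have $\aleph_0$ expansions, hence would be mapped to a $q_1$ null infinite point and, via Lemma C, to a point with an eventually periodic expansion; thus $1$ would admit an eventually periodic expansion beginning $1100000$. This is a polynomial identity over $\mathbb{Q}$ and must persist under the Galois conjugate $q_1^*\approx-1.20458$, where a direct estimate of the geometric tail shows the right-hand side is strictly less than $1$ --- a contradiction. If you want to salvage your direct approach, you would need to replace the open-ended induction by either (a) a proof that the set of branching points of $1$ is finite together with an explicit check of each (which the non-Pisot nature of $q_1$ makes doubtful), or (b) a uniform argument in the spirit of the paper's conjugate trick; as written, the key step is asserted rather than proved.
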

\begin{proof}
We proceed via a proof by contradiction. We assume that $1$ has $\aleph_{0}$ $q_{1}$-expansions and obtain a contradiction. It is a simple calculation to show that $$w:=(T_{q_{1},0}^5\circ T_{q_{1},1}^{2})(1)\in S_{q_{1}}.$$ Therefore $w$ has either $\aleph_{0}$ $q_{1}$-expansions or $2^{\aleph_{0}}$ $q_{1}$-expansions. Since we have assumed $1$ has $\aleph_{0}$ $q_{1}$-expansions, $w$ must also have $\aleph_{0}$ $q_{1}$-expansions. Therefore $w$ can be mapped to a $q_{1}$ null infinite point, and by Lemma $C$ it can be mapped to a point with a periodic $q_{1}$-expansion. The above implies that $1$ has a $q_{1}$ expansion that begins $(1100000(\delta_{i})_{i=1}^{\infty})$ where $(\delta_{i})_{i=1}^{\infty}$ is eventually periodic. This is obviously equivalent to
\begin{equation}\label{1 equation}
1=\frac{1}{q_{1}}+\frac{1}{q_{1}^{2}}+\frac{1}{q_{1}^{7}}\sum_{i=1}^{\infty}\frac{\delta_{i}}{q_{1}^{i}}.
\end{equation} Since $(\delta_{i})_{i=1}^{\infty}$ is eventually periodic we may use properties of geometric series to deduce that (\ref{1 equation}) is equivalent to
\begin{equation}\label{2 equation}
1=\frac{1}{q_{1}}+\frac{1}{q_{1}^{2}}+\frac{f(q_{1})}{g(q_{1})}
\end{equation} where $f(x),g(x)\in \mathbb{Z}[x].$ Equation (\ref{2 equation}) is just an algebraic relation and so must also be satisfied by the conjugates of $q_{1}$, that is the other roots of $x^6-x^4-x^3-2x^2-x-1=0.$ We now show that this cannot be the case for a particular choice of conjugate, namely $q_{1}^*\approx -1.20458.$ Equation (\ref{2 equation}) is equivalent to equation (\ref{1 equation}), so (\ref{1 equation}) must also hold with $q_{1}$ replaced by $q_{1}^{*}.$ We observe the following
\begin{align*}
1&=\frac{1}{q_{1}^*}+\frac{1}{(q_{1}^*)^2}+\frac{1}{(q_{1}^*)^7}\sum_{i=1}^{\infty}\frac{\delta_{i}}{(q_{1}^*)^i}\\
& \leq \frac{1}{q_{1}^*}+\frac{1}{(q_{1}^*)^2}+\frac{1}{(q_{1}^*)^7}\frac{q_{1}^{*}}{(q_{1}^*)^2-1}\\
& <1.
\end{align*}
Where the final strict inequality follows from a simple calculation. Thus we have our desired contradiction.
\end{proof} The proof that $q_{2}\notin \mathcal{B}_{1,\aleph_{0}}$ is done analogously. In this case we similarly use a conjugate of $q_{2},$ namely the number $q_{2}^*\approx -1.26493.$



 YURU ZOU,  LIJIN WANG, JIAN LU: COLLEGE OF MATHEMATICS AND COMPUTATIONAL SCIENCE,
SHENZHEN UNIVERSITY, SHENZHEN 518060, CHINA

SIMON BAKER: SCHOOL OF MATHEMATICS, THE UNIVERSITY OF MANCHESTER,
OXFORD ROAD, MANCHESTER M13 9PL, UNITED KINGDOM

E-mail: yrzou@163.com, ljwang1989@126.com, jianlu1979@163.com,
simon.baker@manchester.ac.uk

\end{document}